\newtheorem{thm}{Theorem}[section]
\newtheorem{prop}[thm]{Proposition}
\newtheorem{lemma}[thm]{Lemma}
\newtheorem{cor}[thm]{Corollary}
\theoremstyle{definition}
\newtheorem{defn}[thm]{Definition}
\newtheorem{rem}[thm]{Remark}
\newcommand{\C}{\mathbb{C}}
\newcommand{\Q}{\mathbb{Q}}
\newcommand{\Z}{\mathbb{Z}}
\newcommand{\cC}{{\mathcal{C}}}
\newcommand{\cD}{{\mathcal{D}}}
\newcommand{\cM}{\mathcal{M}}
\newcommand{\cO}{\mathcal{O}}
\newcommand{\cR}{\mathcal{R}}
\newcommand{\cT}{\mathcal{T}}
\newcommand{\cU}{\mathcal{U}}
\newcommand{\cW}{\mathcal{W}}
\newcommand{\gm}{\mathfrak{m}}
\newcommand{\full}{\mathrm{full}}
\newcommand{\mat}[1]{\left( \begin{matrix} #1 \end{matrix} \right)}
\newcommand{\smat}[1]{ \left( \begin{smallmatrix} #1 \end{smallmatrix} \right)}
\DeclareMathOperator{\ad}{ad} 
\DeclareMathOperator{\End}{End}
\DeclareMathOperator{\Frob}{Frob}
\DeclareMathOperator{\Ind}{Ind}
\DeclareMathOperator{\Gal}{Gal}
\DeclareMathOperator{\GL}{GL}
\DeclareMathOperator{\PGL}{PGL}
\DeclareMathOperator{\rH}{H}
\DeclareMathOperator{\Hom}{Hom}
\DeclareMathOperator{\Spec}{Spec}
\DeclareMathOperator{\Tr}{Tr}
\title{On the eigencurve at classical weight one points}
\author{Jo{\"e}l Bella{\"\i}che \and Mladen Dimitrov }
\address{Brandeis University,  Department of Mathematics MS 050, 415 South Street,
Waltham, MA 02453,  USA}
\email{jbellaic@brandeis.edu}
\thanks{The first author is supported in part by the NSF grants DMS 1101615 and 1405993. The second author is supported in part by Agence Nationale de la Recherche grants ANR-10-BLAN-0114 and ANR-11-LABX-0007-01.} 
\address{Universit{\'e}  Lille 1, UMR CNRS 8524, UFR Math{\'e}matiques, 
59655 Villeneuve d'Ascq Cedex, France}
\email{mladen.dimitrov@gmail.com}
\begin{document}

\begin{abstract}
We show that the $p$-adic eigencurve is smooth at classical weight one points which are regular at $p$ and give a precise criterion for 
etaleness over the weight space at those points. Our approach uses deformations of Galois representations.  
\end{abstract}

\maketitle

\section{Introduction}\label{intro}

A  well known result of Hida \cite[Corollary 1.4]{hida85} asserts that the  eigencurve is \'etale over the weight space 
at all classical  ordinary points of weight two or more,  hence is smooth at those points. 
As a consequence of Coleman's control theorem \cite[Theorem 6.1]{coleman}, this result generalizes
to all non-critical regular classical points of weight two or more.
The aim of this paper is to further investigate the geometry of the eigencurve at classical  points of weight one. Note that these points are both ordinary and numerically critical (cf.~\cite[\S2.4.3]{bellaiche-chenevier-book})

In order to state our results, we need to fix some notations. 
Denote by $G_L$ the absolute Galois group of a perfect field $L$.   
We let $\bar\Q\subset \C$ be the field of algebraic numbers and $\tau\in G_{\Q}$
 be the complex conjugation.

Let $f(z)=\sum_{n\geq 1} a_n e^{2i \pi n z}$ be a newform of  weight one, level $M$ and nebentypus $\varepsilon$.  It is a theorem of Deligne and Serre  \cite{deligne-serre} that there exists a  continuous irreducible representation with finite image:
\begin{equation*}
\rho: G_{\Q}\to \GL_2(\C),
\end{equation*}
which is unramified outside $M$ and
such that for all $\ell \nmid M$ we have $\Tr \rho(\Frob_\ell)=a_\ell$ and $\det \rho(\Frob_\ell)=\varepsilon(\ell)$, where  $\Frob_\ell$ denotes an arithmetic Frobenius  at $\ell$.
Moreover $\rho$  is {\it odd} in the sense that $\det\rho(\tau)=-1$.

Throughout this article, we fix a prime number $p$, as well as an algebraic closure $\bar \Q_p$ of $\Q_p$ and an embedding $\iota_p:\bar\Q\hookrightarrow\bar\Q_p$.  
Let $\alpha_p$ and $\beta_p$ be the roots (possibly equal) of the  Hecke polynomial  $X^2-a_p X+\varepsilon(p)$, 
that  we see  as elements of $\bar \Q_p$ using $\iota_p$.
 We say that  $f$ is {\it regular} at $p$ if $\alpha_p\neq\beta_p$.

In order to    deform $f$   $p$-adically, one must first choose a {\it  $p$-stabilization }
of $f$ with finite slope, that is an eigenform  of level $\Gamma_1(M) \cap \Gamma_0(p)$  sharing the same eigenvalues as $f$ away from $p$ and having a non-zero $U_p$-eigenvalue.  This is done differently according to whether $p$ divides the level or not. If $p \nmid M$, we define
$f_\alpha(z)=f(z)-\beta_p f(pz)$ and $f_\beta(z)=f(z)-\alpha_p f(pz)$. Those forms are the $p$-stabilizations of $f$ with 
  $U_p$-eigenvalues $\alpha_p$ and $\beta_p$, respectively. Note that $f_\alpha$ and $f_\beta$ are distinct if, and only if, $f$ is regular at $p$.
If $p$ divides $M$ and $a_p\neq 0$ (that is if $f$ is regular at $p$), then  $U_p\cdot f=a_p f$ and we simply set $\alpha_p=a_p$ and $f_\alpha=f$. 
It follows from \cite[Theorem 4.2]{deligne-serre} that the $U_p$-eigenvalue of any $p$-stabilization $f_\alpha$ is a root of unity and {\it a fortiori} a
$p$-adic unit. Hence, if $f$ is regular at $p$, then $f_\alpha$  is  ordinary at $p$.

Denote by $N$  the prime to $p$-part of $M$. Let  $\cC$ be the eigencurve of tame level $N$, constructed 
with the Hecke operators $U_p$ and $T_\ell$  for  $\ell \nmid Np$. There exists a flat and locally finite $\kappa : \cC \to \cW$, where the weight space $\cW$ is the 
rigid analytic space over $\Q_p$ representing continuous group homomorphisms from $\Z_p^\times\times(\Z/N\Z)^\times$ to $\mathbb{G}_m$, and $f_\alpha$ defines a point on $\cC$, whose image by $\kappa$ has finite order (cf.~\S\ref{hecke} for more details).

  \begin{thm} \label{main-thm}
  Let $f$ be a classical weight one newform of level $\Gamma_1(M)$ which is regular at $p$.
  Then the eigencurve $\cC$ is smooth at $f_\alpha$. 
  Moreover $\kappa$ is \'etale at $f_\alpha$
 if, and only if, there does not exist a real quadratic field  $K$ in which $p$ splits
  and such that  $\rho_{|G_K}$ is reducible. 
     \end{thm}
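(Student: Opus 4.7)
The plan is to recast the question in terms of the deformation theory of $\rho_f$ and reduce to an explicit Selmer group calculation. Since $f$ is regular at $p$, the local representation $\rho_{f|G_{\Q_p}}$ is unramified and decomposes as $\psi_\alpha\oplus\psi_\beta$ with $\psi_\alpha(\Frob_p)=\alpha_p$ and $\psi_\beta(\Frob_p)=\beta_p$. To the refinement specified by $\alpha$ I would attach a deformation functor $\cD^\alpha$ on Artinian local $\bar\Q_p$-algebras $A$: a deformation $\rho_A$ of $\rho_f$ lies in $\cD^\alpha(A)$ if and only if $\rho_{A|G_{\Q_p}}$ admits a $G_{\Q_p}$-stable line deforming the $\psi_\alpha$-eigenline. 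Extending Kisin's techniques to this critical ordinary setting, I would identify the completed local ring $\hat\cO_{\cC,f_\alpha}$ with the universal deformation ring $R^\alpha$ pro-representing $\cD^\alpha$, with $\kappa$ recovering the restriction to inertia of the character acting on the deformed $\alpha$-line.

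Next I would compute the tangent space $t_{R^\alpha}$ as a Selmer group. First-order deformations are classes in $H^1(G_{\Q,Np},\ad\rho_f)$, and the condition on the deformation of the $\psi_\alpha$-line amounts to demanding that the restriction at $p$ lies in the image of $H^1(G_{\Q_p},F^\alpha)$, where $F^\alpha\subset\ad\rho_f$ is the subspace preserving the $\alpha$-line. Since $\rho_f$ has finite image, $\ad\rho_f$ becomes trivial over a finite Galois extension $L/\Q$; inflation--restriction then expresses $H^1(G_{\Q,Np},\ad\rho_f)$ via a $\Gal(L/\Q)$-isotypic piece of $\Hom(G_L^{\mathrm{ab}},\bar\Q_p)$, which by global class field theory is controlled by the units and idele class group of $L$. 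An analogous reduction applies to the local terms at $p$ and at archimedean places, turning the Selmer computation into a finite-dimensional linear algebra problem.

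Finally, I would show that $\dim t_{R^\alpha}=1$ in all cases, yielding smoothness of $\cC$ at $f_\alpha$, and analyze the composition $t_{R^\alpha}\to t_{\cW}$. The map is an isomorphism---so $\kappa$ is etale---except when an additional tangent class appears and is killed by $\kappa$. Such an extra class arises precisely when $\rho_f=\Ind_{G_K}^{G_\Q}\chi$ is dihedral: the refinement $\alpha$ corresponds to choosing a prime of $K$ above $p$, and one can deform $\chi$ independently at the two primes above $p$, producing a new class. For this to yield a non-zero element of $t_{R^\alpha}$ the prime $p$ must split in $K$, and the interplay of complex conjugation at the archimedean place with the local conditions forces $K$ to be totally real; in the imaginary dihedral case the complex conjugation kills the would-be class. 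This extra class is automatically annihilated by the weight map, which explains the failure of etaleness in exactly the stated exceptional case.

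The hardest step will be identifying $\hat\cO_{\cC,f_\alpha}$ with $R^\alpha$: classical weight-one points are $\theta$-critical, so Kisin's non-critical slope argument does not apply directly, and one must separately prove that every infinitesimal deformation of $f_\alpha$ on $\cC$ arises from a $\cD^\alpha$-deformation of $\rho_f$. The delicate Selmer group bookkeeping---pinpointing exactly when the second tangent class exists, showing it is non-trivial, and ruling out the imaginary dihedral case---is the second main technical hurdle.
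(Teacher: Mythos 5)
Your overall skeleton (deformation functor with an ordinarity condition at $p$, tangent space as a Selmer group, reduction via inflation--restriction and class field theory to unit-group computations over the splitting field, dihedral analysis for the failure of etaleness) is the same as the paper's, but there is a genuine gap at the heart of the Selmer computation. After class field theory the problem is \emph{not} finite-dimensional linear algebra: the multiplicity of a nontrivial character $\pi$ of $G=\Gal(H/\Q)$ in $\rH^1(H,\bar\Q_p)\simeq\Hom\bigl((\cO_H\otimes\Z_p)^\times/\bar\cO_H^\times,\bar\Q_p\bigr)$ is only known unconditionally to lie between $\dim\pi^-$ and $\dim\pi$, and pinning it down to $\dim\pi^-$ is exactly Leopoldt's conjecture for $H$ --- which is open for the fields occurring here (projective image $A_4$, $S_4$, $A_5$, and dihedral $H$ not abelian over an imaginary quadratic field). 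The upper bound $\dim\pi$ is too weak to give $\dim t\le 1$. The paper circumvents this with two inputs you do not have: a lemma showing that the local condition forces the entries at $1$ of a tangent cocycle to vanish, so that by the cocycle relation $t_{\cD'}$ has a basis of cocycles with entries in $\bar\Q$; and the Baker--Brumer transcendence theorem, which controls $\bar\Q$-linear relations among $p$-adic logarithms of units and lets one compute the maximal two-sided ideal of $\bar\Q[G]$ killing the units. (In the split imaginary dihedral case Leopoldt is known by Ax--Baker--Brumer; in the split real case $H$ is CM and no Leopoldt is needed.) Relatedly, your heuristic for the dichotomy is not the actual mechanism: since $\dim t=1$ in all cases, no ``additional class'' appears --- in the real split case the unique tangent direction lies in the constant-determinant (fiber) direction, i.e.\ $t_{\cD'}\neq 0$; and ruling out the imaginary split case is not ``complex conjugation kills the class'' but the unit-theoretic fact that a homomorphism on $\prod_{w\mid v}\cO_{H,w}^\times$ vanishing on global units must factor through the norm, hence misses the $\chi^\sigma/\chi$-isotypic component. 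Your plan also leaves the exceptional cases essentially untouched.

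The step you single out as hardest --- identifying $\hat\cO_{\cC,f_\alpha}$ with the deformation ring by extending Kisin's method to this $\theta$-critical setting --- is both harder than necessary and never proved in the paper. All that is needed is a \emph{surjection} $\cR\to\cT$: one builds $\rho_\cT$ over the completed local ring by Nyssen--Rouquier from the pseudocharacter, proves it is ordinary with unramified rank-one quotient on which $\Frob_p$ acts by $U_p$ (Wiles' theorem on each irreducible component, reducedness of $\cT$, and crucially $p$-regularity $\psi_1\neq\psi_2$ plus Nakayama to get a free rank-one quotient over $\cT$), and notes surjectivity because $T_\ell$ and $U_p$ topologically generate $\cT$ over $\Lambda$. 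Then $\dim t_{\cD}=1$ together with $\cT$ being reduced of Krull dimension $1$ forces the surjection to be an isomorphism of regular local rings, and etaleness is read off from $\cR'\simeq\cT'=\cT/\gm_\Lambda\cT$, whose tangent space is $t_{\cD'}$. So the logical order is the reverse of yours: the tangent-space bound makes the comparison with $\cT$ cheap; what you do still need is the ordinarity of the family at the weight-one point (your ``every infinitesimal deformation on $\cC$ is a $\cD^\alpha$-deformation''), and that is where regularity at $p$ enters, not a Kisin-style argument.
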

     
     The latter case was first investigated by Cho and Vatsal  in \cite{cho-vatsal} where they show, under some additional assumptions,  that the  ramification index is exactly $2$. We have  
recently learned that Greenberg and Vatsal  have announced a  result similar to ours under the assumption that the adjoint representation is regular at $p$.

Let us  observe already that there is alternative definition of the eigencurve which also uses  the operators $U_\ell$ for $\ell \mid N$. 
However, had we decided to work with this eigencurve,  by Theorem~\ref{local-isom}, 
the results would have been exactly the same.

\begin{cor} \label{main-cor} Let $f$ be a classical weight one  newform of level $\Gamma_1(M)$, which is  regular at $p$. 
Then there is a unique irreducible component of $\cC$ containing $f_\alpha$. If $f$ has CM by a quadratic imaginary field $K$ in which $p$ is split, then all 
classical points of that component also have CM by $K$.
\end{cor}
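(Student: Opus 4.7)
The plan is to deduce both assertions from Theorem \ref{main-thm}. Since $\cC$ is smooth at $f_\alpha$, the local ring $\cO_{\cC, f_\alpha}$ is regular of dimension one, hence an integral domain; consequently exactly one irreducible component $\mathcal{I}$ of $\cC$ passes through $f_\alpha$, which settles the first assertion.

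For the second assertion, assume $f$ has CM by an imaginary quadratic field $K$ in which $p$ splits, and write $\rho_f = \Ind_{G_K}^{G_\Q}\psi$ for some finite-order character $\psi$ of $G_{K,M}$. (Note that $f$ is automatically regular at $p$: otherwise $\psi$ would be invariant under $\Gal(K/\Q)$, forcing $\rho_f$ to be reducible.) The next step is to construct a CM analytic family passing through $f_\alpha$. Let $\mathfrak{p}$ be the prime of $K$ above $p$ selected by $\iota_p$, so that $\alpha_p = \psi(\Frob_\mathfrak{p})$, and let $\tilde\psi$ be the deformation of $\psi$ obtained by multiplying it with a universal $p$-adic character of the $\Z_p$-extension of $K$ unramified outside $\mathfrak{p}$. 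Then $\tilde\psi$ lives over a one-dimensional rigid disk $D$, and its induction $\Ind_{G_K}^{G_\Q}\tilde\psi$ is a family of Galois representations whose classical specializations are attached to CM newforms of various weights together with their ordinary $\mathfrak{p}$-stabilization. By the universal property of $\cC$, this defines a rigid-analytic morphism $D \to \cC$ sending a distinguished point of $D$ to $f_\alpha$.

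Now the smoothness of $\cC$ at $f_\alpha$ forces the image of $D$ to lie in $\mathcal{I}$ locally, and a dimension count shows the resulting map $D \to \mathcal{I}$ is locally finite and surjective near the distinguished point; in particular the CM classical points form a Zariski-dense subset of $\mathcal{I}$. To upgrade this to the statement that \emph{every} classical point of $\mathcal{I}$ has CM by $K$, I would use that the condition $\rho_x \cong \rho_x \otimes \chi_K$, where $\chi_K$ denotes the quadratic character attached to $K$, is closed on $\cC$: it is cut out on the universal pseudo-character by the vanishing of $(1-\chi_K(g))\Tr\rho(g)$ for $g \in G_{\Q, Np}$. Since this closed locus contains a dense subset of the irreducible $\mathcal{I}$, it contains all of $\mathcal{I}$, and for a classical point this amounts to having CM by $K$.

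The main obstacle lies in constructing the CM family carefully enough to ensure that its distinguished point maps to $f_\alpha$ rather than to its companion $f_\beta$: the two $p$-stabilizations correspond to the two primes $\mathfrak{p}$ and $\bar{\mathfrak{p}}$ of $K$ above $p$, so attentive bookkeeping of which factor of $\psi$ is being deformed is required. Once the family is in place, the smoothness assertion of Theorem \ref{main-thm} carries the rest of the argument.
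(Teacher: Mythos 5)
Your first assertion (smoothness at $f_\alpha$, hence a unique irreducible component through it) is exactly how the paper proceeds. For the CM assertion, however, the pivotal step of your argument --- ``by the universal property of $\cC$, this defines a rigid-analytic morphism $D \to \cC$'' --- is a genuine gap: the Eigencurve has no universal property with respect to families of Galois representations; its points parametrize systems of Hecke eigenvalues occurring in spaces of finite-slope overconvergent forms. To get a map from your Galois-theoretic family $\Ind_K^{\Q}\tilde\psi$ into $\cC$ you must cross back to the automorphic side: either realize the family as a family of ordinary eigenforms (the $\Lambda$-adic CM theta series) and invoke an interpolation criterion for eigenvarieties (in the spirit of \cite{chenevier-JL}, or \cite[Prop.~7.2.8]{bellaiche-chenevier-book}), checking that at a Zariski-dense set of points of $D$ the specialized eigensystem occurs in the overconvergent forms of the corresponding weight with slope zero, and that the distinguished point yields precisely the eigensystem of $f_\alpha$ (your $U_p=\tilde\psi(\Frob_{\mathfrak p})$ bookkeeping); or do what the paper does, namely apply Buzzard's eigenvariety machine \cite{buzzard} to the submodule of overconvergent forms with CM by $K$, producing a one-dimensional CM eigencurve $\cC_K$ with a closed immersion $\cC_K\hookrightarrow\cC$ (a simpler analogue of the main result of \cite{chenevier-JL}). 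This automorphic input is the real content of the corollary and cannot be conjured from the Galois-side family alone.

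Granting the existence of the map $D\to\cC$, your endgame is correct and genuinely different from the paper's: the paper concludes at once that the unique component through $f_\alpha$ is the image of $\cC_K$, whereas you observe that the self-twist locus, cut out on the pseudo-character (\ref{pseudo}) by the vanishing of $(1-\varepsilon_K(g))\Tr\rho(g)$ for all $g$, is Zariski closed, contains the images of the classical CM points of $D$ which accumulate at $f_\alpha$, hence contains the whole (irreducible, reduced) component; and for an irreducible classical $\rho_x$ the condition $\rho_x\simeq\rho_x\otimes\varepsilon_K$ does give induction from the imaginary field $K$, i.e.\ CM. This is a perfectly good alternative, at the cost of the interpolation lemma above. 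One incidental error: regularity at $p$ is \emph{not} automatic for forms with CM by $K$ with $p$ split. Irregularity means $\psi(\Frob_{\mathfrak p})=\psi^\sigma(\Frob_{\mathfrak p})$, i.e.\ $\mathfrak p$ splits completely in the field cut out by $\psi/\psi^\sigma$, which can well happen with $\psi\neq\psi^\sigma$; such irregular weight one CM forms are precisely the subject of \cite{dimitrov-ghate}, and this is why regularity is a hypothesis of the corollary rather than a consequence of the CM assumption.
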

We shall deduce this result at the end of section~\ref{main}.

\medskip
Our interest in studying the geometry of the eigencurve at classical weight one points arose from questions about specializations of primitive Hida families in weight one. In that language the above corollary says that there exists a unique  Hida family specializing to a given $p$-stabilized, regular, classical weight one newform. If one relaxes the condition of regularity, then there may exist distinct primitive Hida families specializing to the same $p$-stabilized classical weight one newform 
(cf.~\cite[\S 7.4]{dimitrov-ghate}).  Studying the geometry of the eigencurve
at the corresponding point not only could answer the question of how many families specialize to that eigenform, but could also potentially give a deeper insight on how exactly those families meet. 

\medskip

Another motivation is the following application to the theory of $p$-adic $L$-functions. Using some ideas of Mazur, Kitagawa associated to any Hida family satisfying certain algebraic conditions (cf.~\cite[p.105]{kitagawa})
a  $p$-adic $L$-function whose specialization in any weight at least two equals, up to a $p$-adic unit, the $p$-adic $L$-function $L_p(g,s)$  attached to a $p$-stabilized ordinary eigenform $g$ of that weight by Mazur, Manin, Vishik, Amice--Velu {\it et al.} (cf.~\cite{MTT}). 
    The smoothness established in Theorem~\ref{main-thm} allows us, by a method used in \cite{Bcrit} and detailed in \cite[Corollary V.4.2]{Bbook}, to give the following  weaker but unconditional  version of  the Mazur--Kitagawa construction.  
    
\begin{cor} Denote by $\cW^+$ (resp. $\cW^-$) the subspace of $\cW$ consisting of homomorphisms sending $-1$ to $1$ (resp. to $-1$).
 Let $f$ be a classical weight one  newform of level $\Gamma_1(M)$ which is  regular at $p$. Then there exist an admissible neighborhood $\cU$ of $f_\alpha$ in $\cC$,  real constants $0<c<C$,  and two  analytic functions $L_p^\pm$ on $\cU \times \cW^\pm$, such that for every classical $g \in \cU$ of weight at least two,
 and every $s \in \cW^\pm$ 
 $$L_p^\pm(g,s) = e^\pm(g) L_p(g,s),$$
 where $e^\pm(g)$ is a $p$-adic period  such that $c < |e^{\pm}(g)|_p < C$.
 Moreover for every $g\in \cU$ the functions $s \mapsto L_p^{\pm}(g,s)$ are bounded.
\end{cor}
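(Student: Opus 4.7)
The plan is to deduce the corollary from \cite[Corollary VI.4.2]{Bbook}, whose essential hypothesis is smoothness of the Eigencurve at the point of interest---exactly what Theorem~\ref{main-thm} supplies at $f_\alpha$.

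First I would check that $f_\alpha$ is ordinary, so that the construction of \cite{Bcrit,Bbook}, which runs inside Stevens' theory of overconvergent modular symbols of slope zero, is applicable. As recalled in the introduction, regularity of $f$ at $p$ combined with \cite[Theorem 4.2]{deligne-serre} forces $\alpha_p$ to be a root of unity, hence a $p$-adic unit. Next, on a sufficiently small admissible affinoid neighborhood $U$ of $f_\alpha$ in $\cC$, smoothness from Theorem~\ref{main-thm} implies that the module of ordinary overconvergent modular symbols of tame level $N$, localized at $f_\alpha$, is free of rank one over $\cO(U)$. Complex conjugation then decomposes this rank one module into two $\pm$-eigenspaces, each free of rank one, and fixing generators $\Phi^\pm$ and pairing with the Mellin-type character attached to $s \in \cW^\pm$ yields the desired two-variable analytic functions $L_p^\pm:U \times \cW^\pm \rightarrow \bar\Q_p$.

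For a classical eigenform $g \in U$ of weight at least two, the specialization of $\Phi^\pm$ at $g$ is a non-zero $p$-adic scalar $e^\pm(g)$ times the classical plus/minus modular symbol of $g$; unwinding the Mellin pairing then reproduces the Mazur--Manin--Vishik--Amice--V\'elu $p$-adic $L$-function $L_p(g,s)$ of \cite{MTT}, giving the identity $L_p^\pm(g,s)=e^\pm(g)L_p(g,s)$. The uniform bound $c<|e^\pm(g)|_p<C$ follows from continuity of $g \mapsto e^\pm(g)$ combined with compactness of $U$ (after possibly shrinking $U$ so that $e^\pm$ does not vanish on it), while boundedness of $s \mapsto L_p^\pm(g,s)$ for each fixed $g \in U$ expresses the slope-zero condition: the associated distribution on $\cW^\pm$ is in fact a bounded measure.

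The main obstacle here is precisely the freeness step in the second paragraph, which in turn requires the smoothness of $\cC$ at $f_\alpha$; this is why a result of this type was previously available only under Kitagawa's technical conditions (A) or (B) of \cite{kitagawa}, which are engineered to force an analogous rank one structure. Everything else is the formal Stevens--Mazur--Kitagawa machinery packaged in \cite[Corollary VI.4.2]{Bbook}, and the substantive input---namely smoothness---is by assumption supplied by Theorem~\ref{main-thm}.
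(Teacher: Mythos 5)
Your proposal is correct and follows essentially the same route as the paper, which proves the corollary simply by invoking \cite[Corollaire VI.4.2]{Bbook} (the Stevens--Mazur--Kitagawa machinery of \cite{Bcrit}) with the smoothness of $\cC$ at $f_\alpha$ from Theorem~\ref{main-thm} as the sole substantive input; your additional remarks on ordinarity of $f_\alpha$ and on the freeness of the modular-symbol module are exactly the content packaged inside that citation. Nothing further is needed.
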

As explained in {\it loc. cit.}, it is easy to see that while the two functions $L_p^\pm$ are not uniquely determined by the conditions of the corollary, each of 
the functions $L_p^\pm(f_\alpha,s) $ on $\cW^\pm$ is uniquely determined up to a non-zero multiplicative constant.  The resulting function on $\cW$ is thus well-defined up to two multiplicative non-zero constants, one on $\cW^+$ and one on $\cW^-$, and seems to be a natural definition for the
analytic $p$-adic $L$-function $L_p(f_\alpha,s)$ of $f_\alpha$. 
The zeros of $L_p(f_\alpha,s)$  with their multiplicities are well-defined and 
 there are only finitely many by the boundedness property. 

\medskip
Let us now explain the main ideas behind the proof of Theorem~\ref{main-thm}. In \S\ref{deforms} we introduce a deformation problem $\cD$ for $\rho$ representable by a 
$\bar\Q_p$-algebra  $\cR$
which by \S\ref{main} surjects to the completed local ring $\cT$ of the eigencurve at $f_\alpha$.
The computation of the tangent space to $\cD$ represents an important part of the proof and shows that its 
 dimension is always $1$, hence the above surjection $\cR \rightarrow \cT$ is an  isomorphism and the eigencurve is smooth at $f_\alpha$. The question of the etaleness is treated in a parallel way by studying  the algebra of the fiber of the weight map at $f_\alpha$, which corresponds to another deformation problem $\cD'$. 
The tangent spaces to  $\cD'$ and $\cD$  are  interpreted  in \S\ref{tangent-defn} as subspaces of $\rH^1(\Q,\ad\rho)$ satisfying certain conditions.   
Using that  $\rho$ has finite image, we apply  inflation-restriction sequences in \S\ref{infres} to further view 
the tangent spaces as  subspaces of $\Hom(G_H,\bar \Q_p) \otimes \ad \rho$, where $H$ is 
 the finite Galois extension of $\Q$ cut out by the adjoint representation of $\rho$.
In \S\ref{bakerbrumer} we introduce  a natural  $\bar \Q$-subspace $\Hom(G_H,\bar \Q_p)_{\bar \Q}$ of $\Hom(G_H,\bar \Q_p)$, stable by the natural action of $\Gal(H/\Q)$, and using the Baker--Brumer theorem in transcendence theory we give a complete description of this subspace as a $\Gal(H/\Q)$-representation.
The crucial  Proposition~\ref{cocycle} asserts   that our tangents spaces have  basis of elements
 lying in $\Hom(G_H,\bar \Q_p)_{\bar \Q} \otimes \ad \rho$.   Computing their dimensions becomes then a pleasant exercise in representation theory of finite groups, which is done in \S\ref{tangent}.

\section{Deformation theory}

We keep the notations from \S\ref{intro}. 
Since the representation $\rho$ has finite image, it is equivalent to a representation whose image is in $\GL_2(\bar \Q)$ and using the embedding $\iota_p$ we can see $\rho$ as a representation of 
$G_{ \Q}$ on a $\bar \Q_p$-vector space $V$ of dimension $2$. 

In this section we introduce and prepare the study of two deformation problems of $\rho$, that we denote by $\cD$ and $\cD'$. Their tangent spaces will be determined in  \S\ref{tangent}.  
 
\subsection{Two deformation problems}\label{deforms}

 Observe that $\iota_p$ yields a canonical embedding $G_{\Q_p} \hookrightarrow G_\Q$ through which we will see $G_{\Q_p} $ as a decomposition group of $G_\Q$ at $p$.  We denote by $I_p$ the inertia subgroup of $G_{\Q_p}$.

Assume that $f$ is regular at $p$. Then  $\rho_{|G_{\Q_p}}$ is an extension of an unramified character 
$\psi''$ such that $\psi''(\Frob_p)=\alpha$, by a {\it distinct} character $\psi'$. 
Since $\rho$ has finite image, we can fix a basis $(e_1,e_2)$ of $V$ for which $\rho(G_\Q) \subset \GL_2(\bar \Q)$
 and such that $\rho_{|G_{\Q_p}}$ acts diagonally, by the character $\psi'$ on $\bar \Q_p e_1$ and by $\psi''$ on $\bar \Q_p e_2$ (the basis $(e_1,e_2)$ is well-defined up to a scaling $(e_1,e_2)\mapsto (\lambda e_1,\mu e_2), \lambda,\mu \in \bar \Q^\ast$ which will be immaterial to us).

 We consider the following  deformation problem of the couple $(\rho,\psi'')$: 
\begin{defn}
For $A$ any local Artinian ring with maximal ideal $\gm_A$ and residue field $A/\gm_A=\bar \Q_p$, let  $\cD(A)$ be the set of strict equivalence classes of representations $\rho_A : G_{\Q} \to \GL_2(A)$ such that  $\rho_A \mod{\gm_A} = \rho$ and which are  ordinary at $p$
in the sense that:
\begin{equation} \label{condord} 
\rho_{A|G_{\Q_p}} = \left( \begin{matrix} \psi'_{A} &  * \\ 0 & \psi''_{A} \end{matrix} \right),  
\end{equation} 
where $\psi''_{A} : G_{\Q_p} \to A^\times$ is an unramified character lifting $\psi''$. 

 Let $\cD'$ be the subfunctor of $\cD$ of deformations with constant determinant. We call $t_{\cD'}\subset t_{\cD}$ the tangent spaces to those functors.
\end{defn}

In the next two sections, we will compute these tangent spaces and prove:
\begin{thm} \label{comptangent} One always has $\dim t_{\cD}=1$. One has $\dim t_{\cD'}=1$ if, and only if,  there exists a real quadratic field $K$ in which $p$ splits such that $\rho_{|G_K}$ is reducible.
\end{thm}
Since $\cD'$ is a subfunctor of $\cD$, we have $t_{\cD'}=0$ unless there exists a real quadratic field $K$ in which $p$ splits such that $\rho_{|G_K}$ is reducible.

The motivation for computing these spaces
is that, as we shall eventually see in \S\ref{main}, 
 $t_{\cD}$ can be identified with the tangent space to the eigencurve at the point corresponding to  
 $f_\alpha$, while $t_{\cD'}$ can be identified with the tangent space at that point to the fiber of the eigencurve over the weight space.
Of course,  $\dim t_{\cD}=1$ is equivalent to the smoothness of the eigencurve at $f_\alpha$, while  $ t_{\cD'}=0$ is equivalent to its etaleness at $f_\alpha$ over the weight space.

\subsection{Tangent spaces}\label{tangent-defn}

We shall now recall the standard cohomological interpretation of the tangent spaces of these deformation functors. To this purpose
let us denote by $\ad \rho$ the adjoint representation of $\rho$, that is the representation of $G_\Q$ on the space $\End_{\bar \Q_p}(V)$ on which $g \in G_\Q$ acts by  $X \to \rho(g)X \rho(g)^{-1}$. We denote by $\ad^0\rho$ the subrepresentation of $\ad \rho$ on the subspace of trace zero endomorphisms. 

The choice of the basis $(e_1,e_2)$ of $V$  identifies $\End_{\bar \Q_p}(V)$ with $M_2(\bar \Q_p)$, and therefore defines four continuous maps $A,B,C,D: \End_{\bar \Q_p}(V) \to \bar \Q_p$ (the upper-left, upper-right, lower-left, lower-right coefficients). Of course those maps are not morphisms of $G_\Q$-representations but by definition of the basis $(e_1,e_2)$ they are morphisms of $G_{\Q_p}$-representations as follows:
\begin{equation*}
\begin{split}
A : (\ad \rho)_{|G_{\Q_p}} \to \bar\Q_p(\psi'/\psi')=\bar \Q_p \text{ , } &
 B  : (\ad \rho)_{|G_{\Q_p}} \to \bar \Q_p(\psi'/\psi''),\\
C  : (\ad \rho)_{|G_{\Q_p}} \to \bar\Q_p(\psi''/\psi') \text{ , } & D  : (\ad \rho)_{|G_{\Q_p}} \to \bar \Q_p(\psi''/\psi'')=\bar \Q_p,
\end{split}
\end{equation*}
 where $\bar \Q_p(\psi''/\psi')$ denotes the representation of dimension $1$ of $G_{\Q_p}$ on $\bar \Q_p$ through the character $\psi''/\psi'$, etc. 
By composing the restriction morphism $\rH^1(\Q,\ad \rho) \to \rH^1(\Q_p,\ad \rho)$,  
coming from the canonical embedding $G_{\Q_p}\hookrightarrow G_\Q$,  with the map $\rH^1(\Q_p, \ad \rho) \to \rH^1(\Q_p,\bar\Q_p(\psi''/\psi'))$ induced by $C$,  one obtains a homomorphism  
$$C_\ast: \rH^1(\Q,\ad \rho) \to \rH^1(\Q_p, \bar \Q_p(\psi''/\psi')).$$ 
Similarly, by composing the restriction morphism  $\rH^1(\Q,\ad \rho) \to \rH^1(I_p,\ad \rho)$ 
with the map $\rH^1(I_p, \ad \rho) \to \rH^1(I_p,\bar \Q_p)$ induced by  $D$,  one obtains a homomorphism  
$$D_\ast : \rH^1(\Q,\ad \rho) \to \rH^1(I_p,\bar \Q_p).$$ 

 \begin{lemma} \label{lemmatd} We have:
 \begin{eqnarray} t_{\cD} &=& \ker \left( \rH^1(\Q,\ad \rho) \stackrel{(C_\ast,D_\ast)}{\longrightarrow} \rH^1(\Q_p,\bar\Q_p(\psi''/\psi')) \oplus \rH^1(I_p, \bar \Q_p ) \right) \text{ and } \\
 t_{\cD'} &=& \ker \left( \rH^1(\Q,\ad^0 \rho)\stackrel{(C_\ast,D_\ast)}{\longrightarrow} \rH^1(\Q_p,\bar\Q_p(\psi''/\psi')) \oplus \rH^1(I_p, \bar \Q_p ) \right).
 \end{eqnarray}
\end{lemma}   
\begin{proof} 
 By definition $t_{\cD}=\cD(A)$ and $t_{\cD'}=\cD'(A)$, where 
$A=\bar \Q_p[\epsilon]/(\epsilon^2)$ is  the algebra of dual numbers over $\Q_p$.
Any deformation $\rho_A $ of $\rho$  is of the form $ (\mathrm{Id} + \epsilon X)\rho$ where $X: G_\Q \rightarrow 
\End_{\bar \Q_p}(V)$ is a crossed homomorphism ($X(gg')= X(g)+ \rho(g)X(g') \rho(g)^{-1}$ for all $g,g'\in G_\Q$), yielding a
natural bijection between     $\cD(A)$ and $\rH^1(\Q,\ad \rho)$. 
Using (\ref{condord}), a direct computation shows that 
$\rho_A \in \cD(A)$ if, and only if, the corresponding cocycle belongs to $\ker(C_\ast,D_\ast)$. 
\end{proof}

\subsection{Use of inflation-restriction}\label{infres}

Let $H$ be a finite  Galois extension of  $\Q$ and let  $G=\Gal(H/\Q)$.
Let $W$ be a  left  $G_\Q$-representation on a finite dimensional $\bar\Q_p$-vector space. 
The natural left action of $G$ on $H$ yields a right action $x\mapsto g^{-1}(x)$ for which 
$\rH^1(H,W)$ becomes a left $\bar\Q_p[G]$-module. 

The embedding  $\iota_p$ singles out  a canonical place $w_0$ among the places $w$ of $H$ above $p$, and  canonical embeddings $G_{\Q_p}\supset G_{H_{w_0}} \subset  G_H$.  
The following general lemma will be used to transform the  expressions of $t_{\cD}$ and $t_{\cD'}$ in 
Lemma~\ref{lemmatd} under inflation-restriction sequences. 
\begin{lemma} 
\label{abstractinfres} Let $W_1$ (resp. $W_2$) be a  quotient of $W$ as a $G_{\Q_p}$-representation (resp. as an $I_p$-representation). 
Then the restriction morphism yields an isomorphism: 
\begin{equation*}
\begin{split}
 &\ker \left( \rH^1(\Q,W) \to  \rH^1(\Q_p,W_1) \oplus  \rH^1(I_{p},W_2) \right) \overset{\sim}{\longrightarrow} \\
  &\ker \left(\rH^1(H,W)^{G} \to \rH^1(H_{w_0},W_1) \oplus \rH^1(I_{w_0},W_2) \right).  
\end{split}
\end{equation*}
 \end{lemma} 
\begin{proof}
One has the following commutative diagram, where the vertical maps are the restriction morphisms:
$$\xymatrix{ \rH^1(\Q,W) \ar[r]\ar[d]  &   \rH^1(\Q_p,W_1) \oplus  \rH^1(I_{p},W_2)  \ar[d] \\
\rH^1(H,W)^{G} \ar[r] & \left(\prod_{w \mid p}  \left( \rH^1(H_{w},W_1) \oplus \rH^1(I_{w},W_2) \right)\right)^{G}
}$$
The vertical maps are isomorphisms by the inflation-restriction exact sequence, since their kernels and cokernels are (products) of cohomology spaces of finite groups 
with values in characteristic zero representations. Therefore, the kernel of the two horizontal maps are isomorphic. Since $G$ acts transitively on the set of places $w$ of $H$ above $p$,  a $G$-invariant  element of  $\prod_{w \mid p}  \left( \rH^1(H_{w},W_1) \oplus \rH^1(I_{w},W_2) \right)$ whose component at one place $w_0$ is zero, is zero at all places $w$. The lemma follows.
\end{proof}

\medskip

We are now ready to apply inflation-restriction to analyze our tangent spaces. Take $H \subset \bar \Q$ to be the number field fixed by $\ker (\ad \rho)$. Then $G=\Gal(H/\Q)$ identifies naturally with the projective image of the representation $\rho$.  Using the basis $(e_1,e_2)$, an element $\rho(g)$ for $g \in G$ is
thus an element of $\PGL_2(\bar \Q)$, and for $X$ a matrix in $M_2(\bar \Q_p)$ we shall denote with a slight abuse of language by $\rho(g)X \rho(g)^{-1} $ the image of $X$ by the adjoint action of $\rho(g)$. 

Note that $\rH^1(H,\ad \rho) =\rH^1(H,\bar \Q_p) \otimes_{\bar \Q_p} \ad \rho $ since by definition $\ad \rho(G_H)=1$.
An element of $\rH^1(H,\bar\Q_p) \otimes \ad \rho$ can be written as a matrix $\smat{ a & b \\ c & d}$ where $a,b,c,d \in \rH^1(H,\bar\Q_p)$, and the natural left $G$-action on $\rH^1(H,\bar\Q_p) \otimes \ad \rho$ is given by: 
$$g\cdot\begin{pmatrix} a & b \\ c & d \end{pmatrix}  = 
\rho(g) \begin{pmatrix} g \cdot a & g \cdot b \\ g \cdot c & g \cdot d \end{pmatrix}\rho(g)^{-1}.$$ Thus  an element of $\rH^1(\Q,\ad \rho) \simeq \left(\rH^1(H,\bar\Q_p) \otimes \ad \rho \right)^G$
is just a matrix as above which is $G$-invariant.
Combining Lemma~\ref{lemmatd}, Lemma~\ref{abstractinfres} and the above discussion, one gets:

\begin{lemma} \label{descptcd} The morphism sending $\smat{ a & b \\ c & d}\in  \rH^1(H,\bar\Q_p) \otimes \ad \rho$ to the 
restriction of $c$ to $G_{H_{w_0}}$ and  the restriction of $d$ to $I_{w_0}$, yields the following isomorphisms: 
 \begin{eqnarray*}t_{\cD} &=& \ker \left( \left(\rH^1(H,\bar\Q_p) \otimes_{\bar \Q_p} \ad \rho\right)^G \longrightarrow  
 \rH^1(H_{w_0},\bar\Q_p)\oplus \rH^1(I_{w_0}, \bar \Q_p) \right)\\
 t_{\cD'} &=& \ker \left( \left(\rH^1(H,\bar\Q_p) \otimes_{\bar \Q_p} \ad^0 \rho\right)^G \longrightarrow \rH^1(H_{w_0},\bar\Q_p)\oplus \rH^1(I_{w_0}, \bar \Q_p) \right).
 \end{eqnarray*}
\end{lemma}   

\begin{rem} 
Though, in view of  the proof of Lemma~\ref{abstractinfres}, one could use instead of $w_0$  any place $w$ of $H$ above $p$, only for $w=w_0$ do the morphisms of Lemma~\ref{descptcd} take the  simple form given there. In fact, 
 the decomposition subgroup $G_{\Q_p}\subset G_{\Q}$ determined by $\iota_p$, and used to define our basis $(e_1,e_2)$, contains a decomposition subgroup of $G_{H}$  at $w$ only when $w= w_0$. 
\end{rem}

\section{Applications of  the Baker--Brumer theorem} \label{bakerbrumer}

Let $H$ be a finite  Galois extension of  $\Q$ with ring of integers $\cO_H$. 

We shall denote by $\widehat{G}$ the set of equivalence classes of left irreducible representations of $G=\Gal(H/\Q)$ over $\bar\Q$, or over $\bar\Q_p$ (the two sets are identified by the embedding $\iota_p$). If $\pi \in \widehat{G}$, we denote by $\pi^+$ (resp. $\pi^-$) the subspace of  $\pi$ where $\tau$ acts by $+1$ (resp. $-1$). 
The trivial representation of $G$ is denoted by $1$.

\subsection{Reminder on local units}

One has $\cO_{H}\otimes_\Z \Z_p \simeq \prod_{{w} \mid p} \cO_{H,{w}}$, where  ${w}$ runs over all places of $H$ above $p$ and $\cO_{H,{w}}$ is the ring of integers of the completion $H_{w}$.

By local class field theory the image of the restriction homomorphism $$\Hom(G_{H_{w}}, \bar \Q_p) \to \Hom(I_{w}, \bar \Q_p)$$ is isomorphic to $\Hom (\cO_{H,{w}}^\times, \bar \Q_p)$ which we will now describe. 

Let  $\log_p: \bar \Q_p^\times \to  \bar \Q_p$ be the standard determination of the
$p$-adic logarithm sending  $p$ to $0$. 
A continuous homomorphism $\Hom (\cO_{H,{w}}^\times, \bar \Q_p)$ is of the form 
\begin{equation*}
u\mapsto 
\sum_{g_w\in J_w} h_{g_w} g_w(\log_p(u))=
\sum_{g_w\in J_w} h_{g_w} \log_p(g_w(u)),
\end{equation*}
for some $h_{g_w}\in  \bar \Q_p$, where $J_w$ is the set of all embeddings
of $H_{w}$ in $\bar \Q_p$.

The embedding $\iota_p:\bar\Q\hookrightarrow\bar\Q_p$  and the 
inclusions $H\subset \bar\Q \subset \C$ define a partition
$$G=\coprod_{w\mid p} J_w $$
coming from the following commutative diagram:
$$\xymatrix{ \bar\Q \ar@{^{(}->}^{\iota_p}[r] & \bar\Q_p \\
H\ar@{^{(}->}^{g}[u] \ar@{^{(}->}[r] & H_w. \ar@{^{(}->}^{g_w}[u] }$$
Hence the  $\bar \Q_p$-vector space  $\Hom((\cO_H \otimes_\Z \Z_p)^\ast,\bar \Q_p)$ has a canonical basis, namely 
$$(\log_p ( \iota_p \circ g\otimes 1))_{g \in G}.$$ 
Since  $g' \in G$ acts on the left on this  basis sending  $\log_p( \iota_p \circ g\otimes 1)$ to  $\log_p ( \iota_p \circ g'g\otimes 1)$,  there is a canonical isomorphism of left $G$-representations: 
\begin{equation}\label{group-alg}
\begin{split}
\bar\Q_p[G] & \overset{\sim}{\longrightarrow}  \Hom \left( (\cO_{H}\otimes \Z_p)^\times, \bar \Q_p\right) \\
\sum_{g\in G} h_{g} g  & \mapsto
\left(u\otimes v \mapsto \sum_{g \in G} h_{g} \log_p \big(\iota_p( g^{-1}(u))v  \big)\right).
\end{split}
\end{equation}

\begin{defn} \label{subring}
For $R$ any subring of $\bar\Q_p$, we define $\Hom((\cO_H \otimes_\Z \Z_p)^\ast,\bar \Q_p)_R$ as the $R$-linear span of the elements of the canonical basis 
$(\log_p( \iota_p \circ g\otimes 1))_{g \in G}$.
\end{defn}

It is clear that one has $\Hom((\cO_H \otimes_\Z \Z_p)^\ast,\bar \Q_p)_R \otimes_R \bar \Q_p = \Hom((\cO_H \otimes_\Z \Z_p)^\ast,\bar \Q_p)$ and that (\ref{group-alg}) restricts to an isomorphism of left  $R[G]$-modules: 
\begin{equation}\label{group-alg-R}
R[G]  \overset{\sim}{\longrightarrow}  \Hom \left( (\cO_{H}\otimes_\Z \Z_p)^\times, \bar \Q_p\right)_R 
\end{equation}

\subsection{Reminder on the cohomology space $\rH^1(H,\bar \Q_p)=\Hom(G_H,\bar \Q_p)$}

Recall  that by global class field theory one has an  exact sequence of 
left $\bar \Q_p[G]$-modules
\begin{equation} \label{cftes} 0 \to \Hom(G_H,\bar \Q_p) \to \Hom((\cO_H \otimes_\Z \Z_p)^\ast,\bar \Q_p) \to \Hom(\cO_H^\ast,\bar \Q_p),\end{equation}
where the first map is dual to the Artin reciprocity map, and the second is the restriction with respect to the inclusion $\cO_H^\ast \to (\cO_H \otimes_\Z \Z_p)^\ast, u\mapsto u\otimes 1$. The left $G$-actions come from the right $G$-action on $H$ (cf.~\S\ref{infres}). The surjectivity of the last map in (\ref{cftes}) is an open problem, equivalent to Leopoldt's conjecture. 

By (\ref{group-alg}), $\Hom((\cO_H \otimes_\Z \Z_p)^\ast,\bar \Q_p)$  is isomorphic to the regular representation   $\displaystyle \bigoplus_{\pi \in \widehat{G}} \pi^{\dim \pi}$ of $G$, while Minkowski's proof of Dirichlet's unit theorem implies that:
\begin{equation}\label{minkowski}
\cO_H^\ast \otimes_\Z \bar \Q \simeq \bigoplus_{\pi \in \widehat{G}, \pi \neq 1} \pi^{\dim \pi^{+}}.
\end{equation}
Hence, as a left $G$-representation, one has
\begin{equation} \label{mpi} \Hom(G_H,\bar \Q_p) \simeq \bigoplus_{\pi \in \widehat{G}} \pi^{m_\pi},\text{ with }m_1=1\text{ and }  \dim \pi^- \leq m_\pi \leq \dim \pi \text{ if }\pi \neq 1. \end{equation}
The Leopoldt's conjecture for $H$ at the prime $p$ is equivalent to the equality $m_\pi=\dim \pi^-$ for every non-trivial $\pi$. The theorem of Baker--Brumer \cite{brumer} on the $\bar \Q$-linear independence of $p$-adic logarithms of algebraic numbers allows to slightly improve (\ref{mpi}), namely to prove (as in \cite[proof of Theorem 1]{emsalem}) that 
\begin{equation} \label{mpi3} m_\pi < \dim \pi\text{ , if } \pi\neq 1 \text{ and } \pi^+\neq 0.\end{equation} 
Note in particular that when $G$ is abelian, $\dim \pi =1$ for all $\pi$ so (\ref{mpi}) and (\ref{mpi3}) imply $m_\pi = \dim \pi^-$ for every non-trivial $\pi$, and we retrieve Ax's well known result that Leopoldt's conjecture holds in the abelian case. 

\begin{lemma} \label{suggestion}
The dimension of the space $\rH^1(\Q,\ad^0 \rho)$ is $2$. 
\end{lemma}
\begin{proof} We have $\rH^1(\Q,\ad^0 \rho)=\left( \Hom(G_H,\bar \Q_p) \otimes_{\bar \Q_p} \ad^0 \rho \right)^G$, 
where $H$ denotes the fixed field of $\ad^0\rho$. Using the fact that each irreducible component of $\ad^0 \rho$ is non-trivial, self-dual and without multiplicity (cf.~\S\ref{tangent}), Schur's lemma implies that: 
$$\dim \left( \Hom(G_H,\bar \Q_p) \otimes_{\bar \Q_p} \ad^0 \rho \right)^G = \sum m_\pi,$$
where $\pi$ runs over all irreducible components of $\ad^0 \rho$ (cf.~(\ref{mpi})). 
Since  $\rho$  is  odd,  the eigenvalues of $\ad^0\rho(\tau)$ are $+1,-1,-1$, therefore every component $\pi$ of $\ad^0 \rho$ satisfies either $\dim \pi^{-} = \dim \pi$ or $\dim \pi^{-} = \dim \pi-1$. According to (\ref{mpi}) and (\ref{mpi3})
 in both cases we  have $m_\pi =  \dim \pi^{-}$,  hence: 
$$\dim \left( \Hom(G_H,\bar \Q_p) \otimes_{\bar \Q_p} \ad^0 \rho \right)^G = \dim (\ad^0 \rho)^{-}=2.$$ \end{proof}
\begin{rem} The above lemma, which is well-known to specialists, says that odd two-dimensional representations of $G_\Q$ over $\bar \Q_p$ are {\it unobstructed}
in the sense of Mazur when they have finite image. It is conjectured, but not known, that even without the finite image, two-dimensional   representations of $G_\Q$ over $\bar \Q_p$
of geometric origin are always unobstructed.
\end{rem}

\subsection{The algebraic subspace $\Hom(G_H,\bar \Q_p)_{\bar \Q}$ of $\Hom(G_H,\bar \Q_p)$.}

Our computation of tangent spaces of Galois deformation problems will take place in a certain natural $\bar \Q$-subspace $\Hom(G_H,\bar \Q_p)_{\bar \Q}$ of $\Hom(G_H,\bar \Q_p)$, which we will    completely determine using the Baker--Brumer theorem. 

\begin{defn} \label{local-Qbar} We define a left $\bar \Q[G]$-module  $\Hom(G_H,\bar \Q_p)_{\bar \Q}$ as the intersection of $\Hom(G_H,\bar \Q_p)$ and $\Hom((\cO_H \otimes_\Z \Z_p)^\ast,\bar \Q_p)_{\bar \Q}$ (cf.~Definition \ref{subring}) inside 
$\Hom((\cO_H \otimes_\Z \Z_p)^\ast,\bar \Q_p)$ using  (\ref{cftes}).
 In other terms,
\begin{equation} \label{defH1R} \Hom(G_H,\bar \Q_p)_{\bar \Q} = \ker \left( \Hom((\cO_H \otimes_\Z \Z_p)^\ast,\bar \Q_p)_{\bar \Q} \to \Hom(\cO_H^\ast,\bar \Q_p) \right)
\end{equation}
\end{defn}

There is no reason to expect that $ \Hom(G_H,\bar \Q_p)_{\bar \Q} \otimes_{\bar \Q} \bar \Q_p = \Hom(G_H,\bar \Q_p)$ and  the following theorem shows that this is false in general.

\begin{thm} \label{H1Qbar} There is an isomorphism of  left $\bar \Q[G]$-modules, 
$$\Hom(G_H,\bar \Q_p)_{\bar \Q} \simeq \bigoplus_{\stackrel{\pi \in \widehat{G}}{\pi=1\text{ or }\pi^+=0}} \pi^{\dim \pi}.$$
\end{thm}

\begin{proof} A key observation is that $\Hom(G_H,\bar \Q_p)_{\bar \Q}$ carries a hidden structure of a {\it right} 
 $\bar \Q[G]$-module, that we will now unveil. 

The $\bar \Q[G]$-module structure of $\cO_H^\ast\otimes_\Z \bar \Q$ yields a structural homomorphism: 
$$\bar \Q[G] \to \Hom_{\bar \Q}(\cO_H^\ast\otimes_\Z \bar \Q,\cO_H^\ast \otimes_\Z \bar \Q)$$
 which is both left and right $G$-equivariant.  By (\ref{group-alg-R}) this defines the last map of the sequence: 
\begin{equation} \label{cftes-Q} 0 \to \Hom(G_H,\bar \Q_p)_{\bar \Q}  \to \Hom((\cO_H \otimes_\Z \Z_p)^\ast,\bar \Q_p)_{\bar \Q}  \to \Hom(\cO_H^\ast, \cO_H^\ast\otimes_\Z \bar \Q),\end{equation}
which we will now show is  exact. By (\ref{defH1R}) it suffices to show that the natural map: 
$$\cO_H^\ast\otimes_\Z \bar \Q\to  \bar \Q_p \text{ , } u\otimes v \mapsto \log_p(\iota_p(u))\iota_p(v)$$
is injective, which is a consequence of the Baker--Brumer theorem \cite{brumer} since, if $u_1, \dots u_r$ form a basis of a finite index torsion-free subgroup of $\cO_H^\ast$, the $p$-adic logarithms of algebraic numbers $\log_p(\iota_p(u_i))$, $i=1,\dots,r$ are $\Q$-linearly independent, hence also $\bar \Q$-linearly independent. Hence (\ref{cftes-Q}) is exact and in particular 
$\Hom(G_H,\bar \Q_p)_{\bar \Q}$ is both   left  and right $\bar \Q[G]$-module.

To conclude we use (\ref{minkowski}) together with the following elementary lemma: if $W$ is any left 
$\bar \Q[G]$-module, then the left $\bar \Q[G]$-module $\ker \left(  \bar \Q[G] \to \Hom_{\bar \Q}(W,W) \right)$
is isomorphic to $\bigoplus_{\pi} \pi^{\dim \pi}$, where the sum is over all $\pi \in \widehat{G}$ which does  
not appear in $W$. 
\end{proof}

\section{Computation of tangent spaces : Proof of Theorem~\ref{comptangent}}\label{tangent}

\begin{prop} \label{cottcd} One has $1 \leq \dim t_{\cD} \leq 1+\dim t_{\cD'}$. \end{prop}
\begin{proof}  Recall from Lemma~\ref{lemmatd} that
$$t_{\cD} = \ker \left( \rH^1(\Q,\ad \rho) \stackrel{(C_\ast,D_\ast)}{\longrightarrow} \rH^1(\Q_p,\bar\Q_p(\psi'/\psi'')) \oplus \rH^1(I_p, \bar \Q_p ) \right)$$ 
and that $t_{\cD'}$ has the same description with $\ad \rho$ replaced by $\ad^0 \rho$.
Since $\ad \rho = \ad^0 \rho \oplus \bar \Q_p$ and $\dim \rH^1(\Q,\bar\Q_p)=1$ it follows that 
$\dim t_{\cD} \leq 1+\dim t_{\cD'}$. 

Lemma~\ref{suggestion} implies that  $\dim\rH^1(\Q,\ad \rho)=3$. Tate's local Euler characteristic formula
yields  $\dim \rH^1(\Q_p,\bar\Q_p(\psi'/\psi'')) = 1$, hence the rank of $C_\ast$ is at most 1. 
Local Class Field Theory easily implies that the rank of the restriction morphism $\rH^1(\Q_p, \bar \Q_p )\to \rH^1(I_p, \bar \Q_p )$ is $1$, hence the rank of $D_\ast$, which factors through that morphism, is at most $1$. Therefore $t_{\cD}$ has dimension at least $3-1-1=1$. \end{proof}

Let  $H \subset \bar \Q$ be again  the number field fixed by $\ker (\ad \rho)$ and $G=\Gal(H/\Q)$.
By  lemma~\ref{descptcd} one can  write an element of   $t_{\cD}\subset \left(\Hom(G_H,\bar \Q_p) \otimes \ad \rho \right)^G$ as 
$\smat{ a & b \\ c & d }$ with $a,b,c,d\in \Hom(G_H,\bar \Q_p)$. 
 Using the natural  morphisms (\ref{cftes}) and (\ref{group-alg}) of  left  $G$-representations 
$$\Hom(G_H,\bar \Q_p) \hookrightarrow  \Hom((\cO_H \otimes_\Z \Z_p)^\ast ,\bar \Q_p) \simeq \bar \Q_p[G]$$
allows us to see
$a,b,c,d$ as elements $\sum_{g \in G} a_g g, \sum_{g \in G} b_g g, \sum_{g \in G} c_g g, \sum_{g \in G} d_g g$ of $\bar \Q_p[G]$ such that for every $g \in G$, we have 
 \begin{equation} \label{invariance}   \mat{a_{g} & b_{g} \\ c_{g} & d_{g}} = \rho(g) \mat{a_{1} & b_{1} \\ c_{1} & d_{1} } \rho(g)^{-1}. \end{equation}
 
  \begin{prop} \label{cocycle}
The tangent space  $t_{\cD'}$ has dimension at most one and is generated by an element $\smat{ a & b \\ c & d }$ of 
$\left(\Hom(G_H,\bar \Q_p)_{\bar \Q}\otimes \ad^0\rho \right)^{G}$ such that $a_1=c_1=d_1=0$.
\end{prop}
\begin{proof} Let $\smat{ a & b \\ c & d }$ be any element of $t_{\cD'}\subset \left(\Hom(G_H,\bar \Q_p) \otimes \ad^0 \rho \right)^G$ with $b_1\in \bar\Q$.
 By Lemma~\ref{descptcd} one has  $c(I_{w_0})=d(I_{w_0})=0$, which in the above notations 
 implies in particular $c_1=d_1=0$.
Since $a=-d$, one also has $a_1=0$. The elements $a_g, b_g, c_g, d_g$ are then determined by (\ref{invariance}) and belong all to  $\bar \Q$ since $\rho(g) \in \PGL_2(\bar \Q)$ for all $g\in G$. This  
  proves that $\smat{a & b \\ c &d}\in \Hom(G_H,\bar \Q_p)_{\bar \Q}\otimes \ad^0\rho$ and generates
$t_{\cD'}$. 
\end{proof}

The proof of Theorem~\ref{comptangent} will  proceed by case analysis depending on the  
representation $\ad^0 \rho$ into irreducible representations. There are three possibilities:

\begin{enumerate}
\item {\it $\ad^0 \rho$ is irreducible.} This is the so-called {\it exceptional} case, and $G$ is isomorphic to $A_4$, $S_4$ or $A_5$.

\item {\it  $\ad^0 \rho$ is the sum of two irreducible representations.} In this case $G$ is a non-abelian dihedral group and  there exists a unique quadratic  number field $K$  and  a finite order character $\chi: G_K \rightarrow \bar \Q^\ast$ such that $\rho \simeq \Ind_{G_K}^{G_\Q} \chi$. 

 Let $\sigma$ be the non-trivial automorphism of $K$ and let $\chi^\sigma$ be the character of $G_K$ defined by $\chi^\sigma(g) = \chi(\sigma^{-1} g \sigma)$. Then   $H$ is the fixed field of $\ker(\chi^\sigma/\chi)$,  and if $\varepsilon_K$ is the quadratic character of $K$, one has 
 \begin{eqnarray} \label{stradrho}
 \ad^0 \rho \simeq \varepsilon_K \oplus \Ind_{G_K}^{G_\Q} (\chi/\chi^\sigma).
 \end{eqnarray}

\item {\it $\ad^0 \rho$ is the sum of three characters.} In this case $G$ is isomorphic to $\Z/2\Z \times \Z/2\Z$ and 
there exist one real quadratic  field $K$, a finite order character $\chi: G_K \rightarrow \bar \Q^\ast$,  and two imaginary quadratic fields $K'$ and $K''$ such that 
$$\ad^0 \rho \simeq \varepsilon_K \oplus \Ind_{G_K}^{G_\Q} (\chi/\chi^\sigma) \simeq \varepsilon_K \oplus \varepsilon_{K'} \oplus \varepsilon_{K''}. $$
\end{enumerate}

\begin{proof}[Proof of Theorem~\ref{comptangent}]
We will show that $t_{\cD'}=0$ unless there exists a real quadratic field $K$ in which $p$ splits such that $\rho_{|G_K}$ is reducible, in which  case $\dim t_{\cD'}=\dim t_{\cD}=1$. 
 Proposition \ref{cottcd} would then imply that $\dim t_{\cD}=1$  in all cases.

Recall that  since $\rho$ is odd, the eigenvalues of $\ad^0\rho(\tau)$ are $+1,-1,-1$, hence $(\ad^0\rho)^+\neq0$. 
If $\ad^0\rho$ is irreducible then  $\left(\Hom(G_H,\bar \Q_p)_{\bar \Q} \otimes \ad^0 \rho \right)^G$ vanishes by Theorem~\ref{H1Qbar}, hence $t_{\cD'}=0$ by Proposition~\ref{cocycle}.

Henceforth  we assume that  $\ad^0\rho$ is reducible.  

\smallskip
Assume that we are in the above case (ii) with  $K$  imaginary.
Then $\varepsilon_K(\tau)=-1$ hence $(\Ind_{G_K}^{G_\Q} (\chi/\chi^\sigma))^+\neq 0$ and
 by Theorem~\ref{H1Qbar} the only component of $\ad^0 \rho$ contributing 
to $\Hom(G_H,\bar \Q_p)_{\bar \Q}$ is $\varepsilon_K$. It follows  that
for $\smat{ a & b \\ c & d }$ a generator of $t_{\cD'}$ as in  Proposition~\ref{cocycle}, one has
$$\begin{pmatrix}a_g & b_g \\ c_g & d_g \end{pmatrix}=
\rho(g)^{-1}\begin{pmatrix}0 & b_1 \\ 0 & 0 \end{pmatrix}\rho(g)
 =\varepsilon_{K} (g) \begin{pmatrix}0 & b_1  \\ 0 & 0 \end{pmatrix},$$ 
for all $g\in G$. If $b_1\neq0$ then the above relation shows that the subspace $\bar \Q_p e_1$ is invariant by 
$\rho(g)$ for all $g\in G$, leading to a contradiction since $\rho$ is irreducible. Hence 
$b_1=0$ and $t_{\cD'}=0$.

\smallskip
Assume finally that we are in case (iii) or in case (ii) with  $K$ real.
Then  $\varepsilon_K(\tau)=1$ and by Theorem~\ref{H1Qbar} we have 
$(\Hom(G_H,\bar \Q_p)_{\bar \Q}\otimes \varepsilon_K)^G=0$, hence
\begin{equation}\label{real}
(\Hom(G_H,\bar \Q_p)_{\bar \Q}\otimes \ad^0\rho)^G=
(\Hom(G_H,\bar \Q_p)_{\bar \Q}\otimes \Ind_{G_K}^{G_\Q} (\chi/\chi^\sigma))^G.
\end{equation}

{\it  First  subcase: $p$ splits in $K$.} 
In this case, if $v$ is the place of $K$ above $p$ determined by $\iota_p$, one has $G_{\Q_p}=G_{K_v} \subset G_K$, hence  $\rho_{|G_K}$ acts diagonally in the basis $(e_1,e_2)$. 
The isomorphism of $G_{\Q}$-representations  (\ref{stradrho}) is then realized by 
$$\begin{pmatrix}a & b \\ c &-a\end{pmatrix} \mapsto \left( \begin{pmatrix}a & 0 \\ 0 &-a\end{pmatrix}, \begin{pmatrix}0 & b \\ c& 0\end{pmatrix}\right).$$
hence by Lemma \ref{descptcd} and (\ref{real}), we  have:
 \begin{eqnarray*} t_{\cD'} &=&  \ker \left( (\Hom(G_H,\bar \Q_p)_{\bar \Q}\otimes \Ind_{G_K}^{G_\Q} (\chi/\chi^\sigma))^G \rightarrow 
 \Hom(G_{H_{w_0}}, \bar \Q_p)\right) \\
  t_{\cD} &=& t_{\cD'} \oplus \ker \left( \Hom(G_H,\bar \Q_p)^G \rightarrow \Hom(I_{w_0}, \bar \Q_p) \right).
\end{eqnarray*}
One has $\ker \left( \Hom(G_H,\bar \Q_p)^G \rightarrow \Hom(I_{w_0}, \bar \Q_p) \right) = 
\ker \left(\Hom(G_\Q,\bar \Q_p) \rightarrow \Hom(I_p,\bar\Q_p)\right)$ by Lemma~\ref{abstractinfres}, and the latter is 0 by class field theory.
Hence $t_{\cD}=t_{\cD'}$ and by Propositions~\ref{cottcd} and \ref{cocycle} we have $\dim t_{\cD} = \dim t_{\cD'}=1$.

\medskip
 {\it  Second  subcase:  $p$ is inert or ramified in $K$.}
 Since $G_{\Q_p}$ is not contained in $G_K$, its image in the dihedral group $H$ contains 
 an order $2$ element which by a slight abuse of notation we   denote $\sigma$.  
 Let $(e'_1,e'_2)$ be a basis in which $\rho_{|G_K} =  \chi \oplus \chi^\sigma$.
By rescaling this basis one can assume that $\rho(\sigma)=\left(\begin{smallmatrix}0 & 1 \\ 1 & 0 \end{smallmatrix}\right)\in \PGL_2(\bar \Q)$ so that  $\rho(\sigma)$ is diagonal in the basis $(e'_1+e'_2,e'_1-e'_2)$. We may therefore assume that $(e'_1+e'_2,e'_1-e'_2)$ is our basis $(e_1,e_2)$.  
As in the previous subcase the 
isomorphism  (\ref{stradrho}) can be realized in the basis $(e'_1,e'_2)$ as: 
$$\begin{pmatrix}a' & b' \\ c' &-a'\end{pmatrix} \mapsto \left( \begin{pmatrix}a' & 0 \\ 0 &-a'\end{pmatrix}, \begin{pmatrix}0 & b' \\ c'& 0\end{pmatrix}\right),$$
hence  is  given  in the basis  $(e_1,e_2)$  by:  
$$\begin{pmatrix}a & b \\ c &-a\end{pmatrix} \mapsto \left( \begin{pmatrix}0 & \frac{b+c}{2} \\ \frac{b+c}{2} &0 \end{pmatrix}, \begin{pmatrix}a & \frac{b-c}{2} \\ \frac{c-b}{2} &-a\end{pmatrix}\right).$$
Let $\smat{ a & b \\ c & -a }$ be a generator of $t_{\cD'}$ as in Proposition~\ref{cocycle}. 
The above computation together with (\ref{real}) implies that $b+c=0$, while 
 Lemma \ref{descptcd}  implies that  $a_1=c_1=0$. Hence    $b_1=-c_1=0$ and  $t_{\cD'}=0$, which competes the proof of the theorem.  
\end{proof}

\section{The eigencurve and its ordinary locus}\label{hecke}

As in the introduction, we let $\cC$ be the $p$-adic eigencurve of tame level $N$ constructed 
using the Hecke operators $U_p$ and $T_\ell$,  $\ell \nmid Np$. It is 
reduced and equipped with a flat and locally finite morphism $\kappa : \cC \to \cW$, called the weight map
(we refer the reader to \cite{coleman-mazur},  in the case $N=1$,  and to \cite{buzzard} for the general case). 
 By construction, there exist analytic functions $U_p\in \cO(\cC)^\times $ and $T_\ell\in \cO(\cC) $, for  $\ell \nmid N$.   The locus where $|U_p|_p=1$ is open and closed in $\cC$, and is called the ordinary locus of the eigencurve; it is closely related to Hida families.    There exists (cf.~\cite[\S7]{chenevier-thesis})  a continuous pseudo-character 
 \begin{equation}\label{pseudo}
 G_{\Q} \to \cO(\cC)
 \end{equation}
 sending  $\Frob_\ell$ to $T_\ell$ for all $\ell \nmid Np$.

If $f_\alpha$ is as in the introduction, its system of eigenvalues corresponds to a point $x\in\cC(\bar \Q_p)$ such that $\kappa(x)$ has finite order. 
Since  $U_p(x)=\alpha$ is a $p$-adic unit, $x$ actually  lies on the ordinary locus of $\cC$. 
Denote by $\cT$ the completed local ring of $\cC$ at $x$, and by $\gm$ its maximal ideal.

\begin{prop} \label{rhott} There exists a continuous representation $$\rho_{\cT} : G_{\Q} \to \GL_2(\cT),$$  such that $\Tr \rho_{\cT}(\Frob_\ell) = T_\ell$ for all $\ell \nmid Np$. 
The reduction of $\rho_{\cT}$ modulo $\gm$  is isomorphic to $\rho$. 
If $f$ is regular at $p$, then $\rho_{\cT}$ is ordinary at $p$ in the sense that:
\begin{equation*}  (\rho_{\cT})_{|G_{\Q_p}} \simeq \left( \begin{matrix} \psi'_{\cT} &  * \\ 0 & \psi''_{\cT} \end{matrix} \right),  \end{equation*} 
where $\psi''_{\cT} : G_{\Q_p} \to \cT^\times$ is the unramified character sending  $\Frob_p$ to $U_p$.
\end{prop}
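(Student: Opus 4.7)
The plan has two parts: lift the pseudo-character (\ref{pseudo}) to a genuine $\GL_2(\cT)$-valued representation, then verify ordinarity at $p$ by a density argument on the ordinary locus.

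For the construction of $\rho_{\cT}$, I would compose the pseudo-character (\ref{pseudo}) with the canonical map $\cO(\cC) \to \cT$ to obtain a continuous two-dimensional pseudo-character $T : G_{\Q,Np} \to \cT$ whose reduction modulo $\gm$ is $\Tr \rho_f$. Since $\rho_f$ has finite non-abelian image (projectively dihedral or exceptional), it is absolutely irreducible. The theorem of Nyssen and Rouquier on lifting pseudo-characters over complete local Noetherian rings with absolutely irreducible residual representation then produces a unique (up to strict equivalence) continuous $\rho_{\cT} : G_{\Q,Np} \to \GL_2(\cT)$ with $\Tr \rho_{\cT} = T$. Its mod-$\gm$ reduction is semisimple with the same trace as the irreducible $\rho_f$, so by Brauer--Nesbitt $\rho_{\cT} \bmod \gm \simeq \rho_f$, and the Chebotarev density theorem gives $\Tr \rho_{\cT}(\Frob_\ell) = T_\ell$ for all $\ell \nmid Np$.

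For ordinarity at $p$, the hypothesis that $U_p(x) = \alpha$ is a $p$-adic unit places $x$ on the ordinary part of $\cC$. In a sufficiently small affinoid neighborhood of $x$ in this ordinary locus, the classical eigenforms of weight $k \geq 2$ form a Zariski-dense subset: this follows from Coleman's classicality theorem (slope-zero overconvergent forms are classical) combined with the density of integer weights $\geq 2$ in $\cW$. At each such classical point $y$, the specialization $\rho_y$ of $\rho_{\cT}$ is the Galois representation attached to a $p$-ordinary classical eigenform of weight $\geq 2$; by Wiles' theorem it is ordinary in the prescribed sense, with unramified quotient character sending $\Frob_p$ to $U_p(y)$. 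To transfer this pointwise ordinarity to a filtration on $\rho_{\cT}$ itself, I would apply a Ribet-type lemma in families: the $\cT$-submodule $L \subset \cT^2$ generated by the images of $\rho_{\cT}(g) - \mathrm{id}$ for $g \in I_p$ has rank one at every classical point, and by the residual regularity $\psi_1 \neq \psi_2$ it extends uniquely to a rank-one free $\cT$-direct summand; the quotient $\cT^2/L$ is then free of rank one and unramified, with Frobenius eigenvalue forced by interpolation from classical points to equal $U_p$.

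The main obstacle is this last step, passing from pointwise ordinarity on a Zariski-dense set of classical specializations to a global $\cT$-filtration by free direct summands. The hypothesis that $f$ is regular at $p$, namely $\psi_1 \neq \psi_2$, is precisely what is needed: it guarantees that the ordinary line in the residual representation is uniquely determined, hence deforms uniquely in the family, and prevents the two eigenspaces for $\Frob_p$ from colliding on the special fiber.
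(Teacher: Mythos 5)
The first half of your proposal (composing the pseudo-character with $\cO(\cC)\to\cT$ and lifting it via Nyssen--Rouquier, then identifying the reduction by Brauer--Nesbitt and using Chebotarev) is exactly the paper's argument. The gap is in the ordinarity step. The paper runs no density argument over classical points: it localizes $\cM$ at the minimal primes of the reduced ring $\cT$, observes that each $V_i=\cM\otimes_{\cT}K_i$ is the Galois representation attached to a Hida family specializing to $f_\alpha$, and quotes Wiles' theorem to obtain $0\to V_i^+\to V_i\to V_i^-\to 0$ with $V_i^-$ a line on which $G_{\Q_p}$ acts by the unramified character sending $\Frob_p$ to $U_p$. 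It then sets $\cM^+=\cM\cap\prod_i V_i^+$ and $\cM^-=\mathrm{im}\bigl(\cM\to\prod_i V_i^-\bigr)$, using $\cT\hookrightarrow\prod_i K_i$, and regularity enters only through Nakayama: $\cM^-/\gm\cM^-$ is a nonzero quotient of $\cM/\gm\cM\simeq\psi_1\oplus\psi_2$ on which $G_{\Q_p}$ acts through $\psi_1$, hence is a line because $\psi_1\neq\psi_2$; so $\cM^-$ is cyclic and faithful, hence free of rank one, the sequence splits, and $\cM^+$ is free of rank one as well.

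Your bridge from pointwise ordinarity at classical points of weight at least two to a filtration of $\cT^2$ is not a proof. First, ordinarity at a Zariski-dense set of classical specializations does not formally transfer to the generic fibres of $\cT$; that transfer is essentially the content of Wiles' theorem, which you would be re-proving, and calling it ``interpolation'' or a ``Ribet-type lemma in families'' leaves precisely the key step unargued (note also that classical points do not lie on $\Spec\cT$, so any density argument must be run on an affinoid neighbourhood and then completed). Second, the concrete device you propose fails as stated: the submodule $L\subset\cM$ generated by the images of $\rho_{\cT}(g)-\mathrm{id}$ for $g\in I_p$ does not commute with specialization, need not be a direct summand, and its fibre at $x$ itself can vanish, since for $p\nmid M$ the weight one representation $\rho_f$ is unramified at $p$; so residual regularity cannot by itself ``extend $L$ uniquely to a rank-one free direct summand.'' What regularity actually buys, in the paper, is only the Nakayama count on the quotient $\cM^-$. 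Saturating $L$ inside $\cM$ would indeed recover $\cM^+$, but proving this, and proving that the resulting quotient is unramified with $\Frob_p$ acting by $U_p$, again requires the generic-fibre ordinarity that your sketch has not established.
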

\begin{proof} Consider the pseudo-character  $ G_{\Q} \to \cT$ obtained by composing 
(\ref{pseudo})  with the algebra homomorphism   $\cO(\cC) \to \cT$. It is a two dimensional pseudo-character taking values in a strictly henselian (even complete) local ring, whose residual pseudo-character is given by the  trace of the irreducible representation $\rho$. 
By a theorem of Nyssen \cite{nyssen} and Rouquier \cite{rouquier} there  exists a continuous representation $\rho_{\cT}$ as in the statement. It remains to show that $\rho_{\cT}$ is ordinary at $p$.

Denote by  $\cM$ the free rank two $\cT$-module on which  $\rho_{\cT}$ acts.  
Let  $(K_i)_{1\leq i \leq r}$ be the fields obtained by localizing $\cT$ at its minimal primes and 
put $V_i=\cM\otimes_{\cT}K_i$ ($1\leq i \leq r$). The $K_i[G_{\Q}]$-module 
 $V_i$ is nothing else but the Galois representation attached to a Hida family specializing to $f_\alpha$. By \cite[Theorem 2.2.2]{wiles}
there exists a  short exact sequence of $K_i[G_{\Q_p}]$-modules: 
\begin{equation*}
0\to  V'_i\to  V_i \to  V''_i\to 0,
\end{equation*}
where $V''_i$ is a line on which $G_{\Q_p}$ acts via the unramified character sending $\Frob_p$ to $U_p$. 
Since $\cT$ is reduced, the natural homomorphism  $\cT\to\prod_i K_i$ is injective. Since $\cM$  is free over 
$\cT$, so is the natural homomorphism $\cM \to\prod_i V_i$. This leads to a  short exact sequence of $\cT[G_{\Q_p}]$-modules:
\begin{equation}\label{splits}
0\to  \cM'\to  \cM \to  \cM''\to 0
\end{equation}
where $\cM'=\cM\cap \prod_i V'_i$ and $\cM''$ is the image of $\cM\subset  \prod_i V_i$ in $\prod_i V''_i$.

One deduces from here  an exact sequence of $\bar\Q_p[G_{\Q_p}]$-modules:
\begin{equation}\label{splitsmod}
  \cM'/\gm\cM'\to  \cM/\gm\cM \to  \cM''/\gm\cM''\to 0
\end{equation}
where $G_{\Q_p}$ acts on $\cM''/\gm\cM''$ via $\psi''$ and on $\cM'/\gm\cM'$ via $\psi'$. 
Since  $\cM/\gm\cM\simeq \psi'\oplus \psi''$ as  $\bar\Q_p[G_{\Q_p}]$-modules and  since $\psi'\neq \psi''$
by assumption, it follows that $\dim_{\bar\Q_p}(\cM''/\gm\cM'')=1$, hence  by Nakayama's lemma,  $\cM''$ is free of rank one over 
$\cT$. It follows that (\ref{splits}) splits, hence  the first homomorphism in (\ref{splitsmod}) is injective and $\cM'$ is free of rank one over $\cT$ by the same argument as for $\cM''$. 
\end{proof}

Let $\Lambda$ be the completed local ring of $\cW$ at $\kappa(x)$ and let  $\gm_\Lambda$ be its maximal ideal.
It is isomorphic to a power series ring in one variable over $\bar\Q_p$. 
The weight map $\kappa$ induces a  finite and flat  homomorphism $\Lambda \to \cT$ of local reduced complete rings (cf.~ \cite[Proposition 7.2.2]{coleman-mazur}). 
The algebra of the fiber of $\kappa$ at $x$ is a local  Artinian  $\bar \Q_p$-algebra given by  
 \begin{equation}\label{fiber}
\cT' := \cT/m_\Lambda\cT.
 \end{equation}

\section{Proof of the Main Theorem and its Corollary}\label{main}

For  the rest of the paper we will assume that  $f$ is regular at $p$. 

Consider the deformation functors $\cD$ and $\cD'$ from \S\ref{deforms} associated to $\rho$ and the unramified character $\psi''$ of $G_{\Q_p}$ sending $\Frob_p$ to $\alpha$. The functor $\cD$ is pro-representable by a local Noetherian $\bar\Q_p$-algebra $\cR$, while $\cD'$  is representable by a local Artinian ring $\cR'$ with  residue field $\bar\Q_p$.  As usual,  one has  isomorphisms of $\bar \Q_p$-vector spaces 
$$t_{\cD}^\ast \cong \gm_{\cR}/\gm_{\cR}^2 \text{ and } t_{\cD'}^\ast\cong \gm_{\cR'}/\gm_{\cR'}^2,$$
where $W^\ast$ is the dual of a vector space $W$.

By Proposition~\ref{rhott}, the representation $\rho_{\cT}$ yields a continuous homomorphism of local complete Noetherian $\bar \Q_p$-algebras
\begin{equation}\label{surj}
\cR \to \cT.
\end{equation}

Let $A$ be any local Artinian ring with maximal ideal $\gm_A$ and residue field $A/\gm_A=\bar \Q_p$. A deformation of $\det(\rho)$ to $A^\times$ is equivalent to 
a continuous homomorphism from $G_{\Q}$ to $1+\gm_A$. By class field theory (and since $1+\gm_A$ does not contain elements of finite order), the latter are elements of:
$$\Hom((\Z/N)^\times \times \Z_p^\times, 1+\gm_A)= \Hom(1+q\Z_p, 1+\gm_A) \text{,  where} $$ 
 $$ q=p\text{ if }p>2\text{ , and } q=4\text{ if }p=2.$$
It follows that the universal deformation ring of $\det(\rho)$ is given by  $\bar\Q_p[[1+q\Z_p]]$ which 
 endows   $\cR$ with a  structure of $\bar \Q_p[[1+q\Z_p]] $-algebra.   
  By definition  $\det(\rho_{\cR})$
  factors through $\bar \Q_p[[1+q\Z_p]]^\times \to \cR^\times$ and $$\cR'=\cR/\gm_{\bar \Q_p[[1+q\Z_p]]}\cR.$$  
   
      Since  the Langlands correspondence  relates the determinant to the central character, the homomorphism (\ref{surj}) is  $\Lambda$-linear, in the sense that there
     exists an  isomorphism $\bar \Q_p[[1+q\Z_p]]\overset{\sim}{\longrightarrow} \Lambda$ fitting into 
     the following  commutative diagram: 
  $$\xymatrix{  \bar \Q_p[[1+q\Z_p]]    \ar[r] \ar@{->}[d] & \Lambda  \ar@{->}[d] \\
  \cR \ar@{->}[r] &\cT}.$$
 In particular, the homomorphism $\det(\rho_{\cT}) : G_{\Q} \to \cT^\times$ factors through $\Lambda^\times  \to  \cT^\times$ and $\cT'$ is the largest quotient of $\cT$ to which $\rho$ can be deformed with constant determinant.

\begin{prop} \label{propsurj}
 The homomorphisms (\ref{surj}) and (\ref{surjbis})  are surjective.  
\end{prop}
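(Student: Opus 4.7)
The plan is to apply Nakayama's lemma for complete local rings: since $\cR$ and $\cT$ are both complete local $\bar\Q_p$-algebras with residue field $\bar\Q_p$, the homomorphism (\ref{surj}) will be surjective as soon as the image of $\cR$ in $\cT$ contains a topological set of $\bar\Q_p$-algebra generators of $\cT$.

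First, I would establish that $\cT$ is topologically generated over $\bar\Q_p$ by the Hecke operators $T_\ell$ (for $\ell \nmid Np$), by $U_p$, and by any topological generator of the maximal ideal $\gm_\Lambda$. This rests on the construction of the Eigencurve recalled in \S\ref{hecke} (see \cite{coleman-mazur,buzzard}): locally on the weight space, $\cO(\cC)$ is generated over $\cO(\cW)$ by the $T_\ell$ and $U_p$, so after completing at $x$ the ring $\cT$ is topologically generated over $\Lambda$ by the same operators. Since $\Lambda$ is a regular local ring of dimension one, adjoining a uniformizer of $\gm_\Lambda$ gives the desired generating family over $\bar\Q_p$.

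Second, I would check that each of these generators lies in the image of (\ref{surj}). By Proposition \ref{rhott}, for every $\ell\nmid Np$ one has $T_\ell = \Tr \rho_\cT(\Frob_\ell)$, and $\rho_\cT$ is the pushforward along (\ref{surj}) of the universal ordinary deformation $\rho_\cR : G_{\Q,Np} \to \GL_2(\cR)$; hence $T_\ell$ is the image of $\Tr \rho_\cR(\Frob_\ell) \in \cR$. Similarly, the ordinary condition defining $\cD$ equips $\rho_\cR$ with a universal unramified character $\psi_{1,\cR} : G_{\Q_p} \to \cR^\times$ whose pushforward is the character $\psi_{1,\cT}$ of Proposition \ref{rhott}; thus $U_p = \psi_{1,\cT}(\Frob_p)$ is the image of $\psi_{1,\cR}(\Frob_p) \in \cR^\times$. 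Finally, the image of $\Lambda$ in $\cT$ already lies in the image of $\cR$: by Lemma \ref{rhoTT2} and the commutative square preceding it, the map $\Lambda \to \cT$ factors through $\cR$ via the structural homomorphism $\bar\Q_p[[1+p\Z_p]]\to \cR$ coming from the determinant. Combining these three points with Nakayama's lemma gives the surjectivity of (\ref{surj}), and reducing modulo $\gm_\Lambda$ immediately yields the surjectivity of (\ref{surjbis}).

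The main obstacle I anticipate is the first step: the topological generation of $\cT$ by the Hecke operators is not formal but relies on the explicit construction of the Eigencurve as an eigenvariety cut out from the spectral space by the Hecke action on overconvergent modular forms. Once this input is granted, the rest of the argument is essentially formal, using only the universal property of $\cR$ together with the $\Lambda$-linearity of (\ref{surj}) recorded in Lemma \ref{rhoTT2}.
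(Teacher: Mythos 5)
Your proposal is correct and follows essentially the same route as the paper: the authors also reduce to checking that the topological $\Lambda$-generators $T_\ell$ ($\ell\nmid Np$) and $U_p$ of $\cT$ lie in the image, obtaining $T_\ell$ as the image of $\Tr\rho_{\cR}(\Frob_\ell)$ and $U_p$ as the image of $\psi_{1,\cR}(\Frob_p)$, with the $\Lambda$-linearity from Lemma \ref{rhoTT2} doing the rest. Your explicit Nakayama framing and the remark that the image of $\Lambda$ factors through $\cR$ merely spell out what the paper leaves implicit.
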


\begin{proof}
 Since $\cT$ is topologically generated  over $\Lambda$ by $U_p$ and  $T_\ell$ for  $\ell \nmid Np$  it suffices to check that those elements are in the image of $\cR$. 
For $\ell \nmid Np$,  $T_\ell=\Tr \rho_{\cT}(\Frob_\ell)$  is the image of the  trace of $\rho_{\cR}(\Frob_\ell)$. 

Recall that the restriction to $G_{\Q_p}$ of the universal deformation
 $\rho_{\cR} : G_{\Q} \to \GL_2(\cR)$   is an extension of an unramified 
character $\psi''_{\cR}$ lifting $\psi''$ by a character $\psi'_{\cR}$. By Proposition~\ref{rhott},  $U_p$ is the image of $\psi''_{\cR}(\Frob_p)$.\end{proof}

Reducing (\ref{surj}) modulo $\gm_\Lambda$ yields a natural surjective homomorphism of local Artinian $\bar \Q_p$-algebras: 
\begin{equation}\label{surjbis}
\cR' \to \cT'.
\end{equation}

\begin{thm} \label{thmiso} 
Both  (\ref{surj}) and (\ref{surjbis}) are isomorphisms and $\cT$ is regular.
\end{thm}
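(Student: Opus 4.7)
The plan is to combine the tangent space bound $\dim t_{\cD} \leq 1$ obtained in Sections~\ref{exc} and~\ref{dih} with the flatness of the weight map, in order to identify $\cR$ with a discrete valuation ring; then leverage the fact that every proper nonzero quotient of a DVR is Artinian to force the surjection (\ref{surj}) to be an isomorphism.

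First I would collect the tangent space input. Since a classical weight one newform has nebentypus of sign $-1$, the representation $\rho_f$ is odd, so its projective image is either exceptional (isomorphic to $A_4$, $S_4$ or $A_5$) or dihedral. Theorems~\ref{exceptional}, \ref{thm1} and \ref{thm2} together give the uniform upper bound $\dim_{\bar\Q_p} t_{\cD} \leq 1$ in every one of these cases, the biquadratic $D_4$ possibility being reduced inside the proof of Theorem~\ref{thm2} to the split case already handled by Theorem~\ref{thm1}. Since $\cR$ pro-represents $\cD$, it follows that $\gm_{\cR}/\gm_{\cR}^2$ is at most one-dimensional, so $\cR$ is a quotient of the formal power series ring $\bar\Q_p[[X]]$ and in particular has Krull dimension at most~$1$.

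Second, I would use the geometry of $\kappa$ to match dimensions. By Lemma~\ref{rhoTT2}, $\Lambda \simeq \bar\Q_p[[1+p\Z_p]]$ is a regular local ring of dimension one, and the weight map being flat and locally finite makes $\cT$ finite flat over $\Lambda$; hence $\cT$ is of Krull dimension one as well. The surjection $\cR \twoheadrightarrow \cT$ from Proposition~\ref{propsurj} then forces $\dim \cR \geq 1$, so $\dim \cR = 1$ and $\cR \simeq \bar\Q_p[[X]]$ is a discrete valuation ring.

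Finally, I would conclude by elementary commutative algebra: every proper nonzero quotient of a DVR is Artinian, but $\cT$ is one-dimensional and therefore not Artinian. Hence the kernel of (\ref{surj}) must vanish, (\ref{surj}) is an isomorphism, and $\cT$ is regular. Reducing this $\Lambda$-linear isomorphism modulo $\gm_\Lambda$ via Lemma~\ref{rhoTT2} immediately yields the isomorphism (\ref{surjbis}). I do not foresee a substantial obstacle at this stage: all the genuine mathematical content has already been concentrated in the tangent space computations of the previous sections; only a careful case-by-case audit is needed to verify that the bound $\dim t_{\cD}\leq 1$ is indeed uniform, with no forgotten configuration.
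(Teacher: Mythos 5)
Your proposal is correct and follows essentially the same route as the paper: the tangent-space bound from Theorems~\ref{exceptional}, \ref{thm1} and \ref{thm2} makes $\cR$ a quotient of $\bar\Q_p[[X]]$, the surjection of Proposition~\ref{propsurj} onto the one-dimensional ring $\cT$ forces $\cR$ to be a regular local ring of dimension one and (\ref{surj}) to be injective, and (\ref{surjbis}) follows by reducing modulo $\gm_\Lambda$ via Lemma~\ref{rhoTT2}. Your write-up merely spells out the DVR-quotient argument that the paper leaves implicit.
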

\begin{proof}
 It follows from Theorem~\ref{comptangent} that the tangent space $t_{\cD}$ of $\cR$ has  dimension $1$. Since $\cT$ is equidimensional of Krull dimension $1$, Proposition~\ref{propsurj} implies that (\ref{surj}) is an isomorphism of regular local rings of Krull dimension $1$. Since  (\ref{surjbis}) is the obtained by
 reducing (\ref{surj}) modulo $\gm_\Lambda$, it is an isomorphism too. 
  \end{proof}

\begin{proof}[Proof of Theorem~\ref{main-thm}]
We have already proved the first part: the eigencurve is smooth at the point $x$ corresponding to $f_\alpha$. 
Moreover, $\kappa$ is \'etale at $x$ if, and only if, the degree of $\kappa$ at $x$, that is 
the dimension of $\cT'$ over $\bar \Q_p$,  equals $1$. 
Since  (\ref{surjbis})  is an isomorphism, $\dim_{\bar \Q_p} \cR'= \dim_{\bar \Q_p} \cT'= 1$ if,  and only if,  $t_{\cD'}=0$.  To complete the proof, it suffices then to apply  Theorem~\ref{comptangent}.
 \end{proof}

\begin{proof}[Proof of Corollary~\ref{main-cor}]
Assume that  $f$ has CM by a field $K$ in which $p$ splits. We need to prove that any irreducible component of $\cC$ containing  $f_\alpha$ also has CM by $K$. Let  $\cC_K$ be the eigencurve constructed using Buzzard's eigenvariety machine \cite{buzzard} using  the submodule of overconvergent modular forms with CM by $K$. By the eigenvariety machine, $\cC_K$ is equidimensional of dimension one, and there is a natural closed immersion from $\cC_K$ to $\cC$ by a simpler analogue of the main result of \cite{chenevier-JL}. Since  $f_\alpha$ belongs to the image of $\cC_K$ and since $\cC$ has a unique irreducible component containing $f_\alpha$, it follows that this component is the 
 image of $\cC_K$.
  \end{proof}

 \section{The full eigencurve}\label{full}

Keep the notations from \S\ref{hecke}.
 Let $\cC^{\full}$ be the $p$-adic eigencurve of tame level $N$ constructed 
using the Hecke operators $T_\ell$ for  $\ell \nmid Np$ and $U_\ell$ for $\ell \mid Np$.
It  comes with a locally finite surjective morphism  $\cC^{\full} \to \cC$ compatible with all other structures (which is not an isomorphism when $N>1$), yielding by composition a two dimensional  pseudo-character $G_{\Q} \to \cO(\cC^{\full})$.
 There is a natural bijection between $\cC^{\full}(\bar \Q_p)$ and the set of systems of eigenvalues of overconvergent eigenforms with finite slope of tame level dividing $N$ and weight in $\cW(\bar \Q_p)$,  sending $y$ to the system of eigenvalues $T_\ell(y)$, $\ell \nmid Np$, and  $U_\ell(y)$, $\ell \mid Np$. 

  Let $\cT^\full$ be the completed local ring of $\cC^{\full}$ at 
the point defined by $f_\alpha$, and let  $\gm^\full$ be its maximal ideal.  Put  $\cT'^\full := \cT^\full/m_\Lambda\cT^\full$. The morphism $\cC^{\full} \to \cC$ yields a homomorphism $\cT \to \cT^\full$ and by Proposition \ref{rhott} 
there exists a continuous representation 
$$\rho_{\cT}^\full : G_{\Q} \to \GL_2(\cT^\full),$$  
such that $\Tr \rho_{\cT}^\full(\Frob_\ell) = T_\ell$,  for all $\ell \nmid Np$. Moreover $\rho_{\cT}^\full$ is ordinary at $p$ and its  
 reduction  modulo $\gm^\full$  is isomorphic to $\rho$.

Denote by  $\cM$ the free rank two $\cT^\full$-module on which  $\rho_{\cT}^\full$ acts.  

\begin{prop} \label{monodromy} Let $\ell$ be a prime dividing $N$. Then  $\rho_{\cT}^{\full}(I_\ell)$ is finite. Moreover:
\begin{itemize} \item[(i)] If $a_\ell \neq 0$, then $\cM^{I_\ell}$  is a free rank one  direct summand of $\cM$ on which 
 $\rho_{\cT}^\full(\Frob_\ell)$ acts as multiplication by $U_\ell$.
\item[(ii)] If $a_\ell = 0$, then $\cM^{I_\ell}=0$ and $U_\ell=0$ in $\cT^\full$.
 \end{itemize}
\end{prop}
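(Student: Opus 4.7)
The plan is to mimic the template of Proposition~\ref{rhott}: since $\cC^{\full}$ is reduced at $x$, the local ring $\cT^\full$ is reduced and one-dimensional, hence embeds into $\prod_i K_i$, where $K_i$ is the fraction field of the completion of the normalization at the $i$-th minimal prime.  Because $U_p(x)=\alpha$ is a $p$-adic unit, $x$ lies on the ordinary locus, so each irreducible component of $\cC^\full$ through $x$ is cut out by a Hida family, and by Hida's control theorem it contains a Zariski-dense set $\Sigma_i$ of classical $p$-stabilized newforms of weight at least two. Let $V_i=\cM\otimes_{\cT^\full}K_i$; since $\cM$ is free of rank two over $\cT^\full$, any $\cT^\full$-submodule of $\cM$ injects into $\prod_i V_i$.

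\textbf{Finiteness of inertia.} The newform $f$ has tame level $N$, and the prime-to-$p$ newform level is preserved along a Hida family, so every $y\in\Sigma_i$ is new at $\ell$ with the same local component $\pi_\ell(y)\cong \pi_\ell(f)$.  Since $\rho_f$ has finite image, $\pi_\ell(f)$ is either a principal series or supercuspidal (Steinberg is excluded in weight one), and by the local--global compatibility of Carayol and Saito at each $y\in\Sigma_i$, the Weil--Deligne parameter of $\rho_y|_{G_{\Q_\ell}}$ carries no monodromy, so $\rho_y(I_\ell)$ is finite of bounded order.  Zariski density of $\Sigma_i$ together with continuity of $\rho_i:G_{\Q,Np}\to\GL_2(K_i)$ then forces $\rho_i(I_\ell)$ to be finite.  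Using $\cT^\full\hookrightarrow\prod_iK_i$, the image $\rho_{\cT}^\full(I_\ell)$ injects into the finite group $\prod_i \rho_i(I_\ell)$, and is thus finite.

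\textbf{Case (i).}  Let $H=\rho_{\cT}^\full(I_\ell)$ and let $e=\tfrac{1}{|H|}\sum_{h\in H}h\in\bar\Q_p[H]$ be the averaging idempotent.  Then $\cM^{I_\ell}=e\cdot\cM$ is a direct summand of $\cM$.  Local--global compatibility at each $y\in\Sigma_i$ identifies $(V_y)^{I_\ell}$ as a line on which $\Frob_\ell$ acts as the Hecke eigenvalue $U_\ell(y)$; by Zariski density the same holds generically, so $(V_i)^{I_\ell}$ is a $K_i$-line with $\Frob_\ell$ acting as $U_\ell$.  Thus $\cM^{I_\ell}$ is a direct summand of $\cM$ of constant rank one, hence free of rank one over the local ring $\cT^\full$; the identity $\rho_{\cT}^\full(\Frob_\ell)=U_\ell$ on $\cM^{I_\ell}$ holds after base change to each $K_i$, so holds in $\cT^\full$ by injectivity.

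\textbf{Case (ii).}  If $a_\ell(f)=0$, then $\pi_\ell(f)$ is either supercuspidal or a principal series both of whose characters are ramified; in either case $\pi_\ell(f)$ has no $I_\ell$-invariant vector, so $(V_y)^{I_\ell}=0$ and $U_\ell(y)=0$ for every $y\in\Sigma_i$.  Zariski density forces $U_\ell=0$ in each $\cO_i$, hence in $\cT^\full$; similarly $\cM^{I_\ell}$ embeds in $\prod_i(V_i)^{I_\ell}=0$.

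\textbf{Main obstacle.}  The real content lies in the two inputs used in the finiteness step: constancy of the local type $\pi_\ell$ along a Hida family (equivalently, rigidity of the Artin conductor at $\ell$ under Hida's control theorem), and the absence of Steinberg specializations, which is specific to the weight one setting and follows from the finiteness of the image of $\rho_f$.  Once these are in hand, the module-theoretic conclusions follow formally from reducedness of $\cT^\full$, freeness of $\cM$, and the averaging idempotent construction.
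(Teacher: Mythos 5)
Your reduction of (i)--(ii) to the finiteness of the inertia image follows the same lines as the paper, but the finiteness step itself contains a genuine gap, and it is exactly at the hard point. You assume that every classical weight $\geq 2$ specialization $y$ on the component through $f_\alpha$ is new at $\ell$ with local component $\pi_\ell(y)\cong\pi_\ell(f)$, attributing this to Hida's control theorem and to ``preservation of the prime-to-$p$ newform level along a Hida family''. Neither gives this. The control theorem only says that such specializations are classical and ordinary; it says nothing about their behaviour at $\ell\mid N$. Even constancy of the tame conductor along the family (itself a nontrivial statement, and one whose standard proofs use precisely the rigidity of $\rho|_{I_\ell}$ in the family that is at stake here) would not suffice: a classical point with conductor exponent $1$ at $\ell$ could be an unramified twist of Steinberg, while $f$, having finite image, is a ramified principal series of the same conductor; so no statement about monodromy follows from conductor bookkeeping. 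The absence of Steinberg specializations at $\ell$ --- equivalently the vanishing of the generic monodromy on each component through $f_\alpha$ --- is the actual content of the proposition, and your argument assumes it; your ``main obstacle'' paragraph concedes this but only says it ``follows from the finiteness of the image of $\rho_f$'' without saying how. Finiteness of the image of $\rho_f$ rules out monodromy at the single point $x$, not at the other classical members of the family, and the passage from $x$ to the whole component is the crux. (Even granting your claim at the classical points, the deduction that $\rho_i(I_\ell)$ is finite from ``finite of bounded order at a dense set of points'' is itself carried by the family version of Grothendieck's monodromy theorem, which you never invoke.)

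The paper argues in the opposite direction. By Grothendieck's monodromy theorem in family (\cite[Lemma 7.8.14]{bellaiche-chenevier-book}) there is a single nilpotent matrix $\mathcal{N}\in M_2(\cT)$ with $\rho_{\cT}(g)=\exp(t_p(g)\mathcal{N})$ on an open subgroup of $I_\ell$; if $\mathcal{N}_s\neq 0$ at some generic point $s$ of $\Spec(\cT)$, then $\rho_{s|G_{\Q_\ell}}$ is an extension of $1$ by the $p$-adic cyclotomic character, and specializing at the weight one point itself forces the same shape on $\rho_f$ restricted to $G_{\Q_\ell}$, which is impossible because $\rho_f$ has finite image and $\ell$ is not a root of unity; reducedness of $\cT$ then gives $\mathcal{N}=0$. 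So the decisive input is the residual point $x$, not the weight $\geq 2$ points, and there is no circularity. Once finiteness is in hand, your treatment of (i)--(ii) is close to the paper's, with two smaller caveats: the paper obtains freeness of $\cM^{I_\ell}$ by Nakayama from $\dim(\cM/\gm^\full\cM)^{I_\ell}=\dim\rho_f^{I_\ell}$, with no need to discuss the generic fibres; and to prove that $\Frob_\ell$ acts by $U_\ell$ you cannot argue only with the $K_i$, since classical points do not live on $\Spec(K_i)$ --- one needs an affinoid model $\cM_U$ of the representation near $x$ and the commutation $(\cM_{U,y})^{I_\ell}=(\cM_U^{I_\ell})_y$ at a Zariski-dense set of classical points $y$, as in the paper's proof.
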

\begin{proof} 
To prove that $\rho_{\cT}^\full(I_\ell)$ is finite, it is enough to prove that $\rho_{\cT}(I_\ell)$ is finite since $\rho_\cT^\full$ factors, by construction, through $\rho_\cT$.
Fix a non-zero continuous homomorphism $t_p : I_\ell \to \Z_p$ .
By Grothendieck's monodromy theorem for families (cf.~\cite[Lemma 7.8.14]{bellaiche-chenevier-book}), there exists a unique nilpotent matrix $\mathcal{N} \in M_2(\cT)$ 
such that  $\rho(g) = \exp(t_p(g) \mathcal{N})$ on an open subgroup of $I_\ell$. Thus it suffices to show that $\mathcal{N}=0$. Let $s$ be a generic point of $\Spec(\cT)$, $\rho_s$ the attached representation and $\mathcal{N}_s$ its monodromy operator. If $\mathcal{N}_s \neq 0$, then as is well known $\rho_{s|G_\ell}$ is an extension of $1$ by the $p$-adic cyclotomic character, hence the same is true for   $\rho_{\cT} \mod \gm \simeq \rho$, which is impossible since $\rho$ has finite image and $\ell$ is not a root of unity. Thus $\mathcal{N}_s=0$ for all generic points $s$. Since $\cT$ is reduced, this easily implies that $\mathcal{N}=0$.

Hence $\rho_{\cT}(I_\ell)$ is finite. It follows that the submodule $\cM^{I_\ell}$ 
 is a direct summand of $\cM$ (as it is the image of the projector $P=\frac{1}{|\rho^\full_{\cT}(I_\ell)|} \sum_{g \in \rho_{\cT}(I_\ell)} g$), 
and that the natural homomorphism  $\cM^{I_\ell} /\gm^\full  \cM^{I_\ell}  \to (\cM/\gm^\full \cM)^{I_\ell}$
is an isomorphism (it is injective since $\cM^{I_\ell}$ is direct summand and surjective, because  $z \in  (\cM/\gm^\full \cM)^{I_\ell}$ is the image of $P(z')$, where $z'$ is any lift of $z$ in $\cM$).
The space $(\cM/\gm^\full \cM)^{I_\ell} = \rho^{I_\ell}$ has dimension $1$ if $a_\ell \neq 0$, and $0$ if $a_\ell = 0$. 
By Nakayama, $\cM^{I_\ell}$ is $0$ if $a_\ell = 0$, and is free of rank one if $a_\ell \neq 0$. 

For the assertions concerning $U_\ell$, choose an affinoid neighborhood $U$ of  
the point defined by $f_\alpha$ in $\cC^\full$ such that there exists a
representation $\rho_U : G_{\Q} \to \GL(\cM_U)$, where $\cM_U$ is a free module over $\cO(U)$ of rank $2$, 
such that $\Tr \rho_U  : G_{\Q} \to \cO(U)$ is the natural pseudo-character (in particular  $\cM_U \otimes_{\cO(U)} \cT^\full \simeq \cM$ as $G_{\Q}$-modules). By standard arguments, there exists a Zariski-dense set of classical points $y \in U$ such that $(\cM_{U,y})^{I_\ell}= (\cM_U^{I_\ell})_y$, where the subscript $y$ means the fiber at $y$. If $\cM^{I_\ell}$ has rank one (resp. zero), so has 
$\cM_U^{I_\ell}$ up to shrinking $U$, and  so has  $(\cM_{U,y})^{I_\ell}$ for $y$ as above, 
 meaning that the action of $\Frob_\ell$ on $(\cM_{U,y})^{I_\ell}$
is the multiplication by $U_\ell(y)$ (resp. that $U_\ell(y)=0$). By Zariski density of those $y$'s, this means that $\rho_U(\Frob_\ell)$ acts by 
multiplication by $U_\ell$ on $\cM_U$ (resp. that $U_\ell = 0$ in $\cO(U)$), which implies the desired results.
\end{proof}

Let us also introduce a deformation problem   $\cD^\full$ where 
$$\cD^\full(A) = \{  \rho_A \in \cD(A),   \rho_A^{I_\ell} \text{ is a free of rank one over } A, \text{ for all } \ell \mid N, a_\ell \neq 0  \}.  $$ 
It is clear that $\cD^\full$ is a representable by a complete local ring $\cR^\full$ and that there is a natural surjective local ring homomorphism $\cR \twoheadrightarrow \cR^\full$.

By Proposition \ref{monodromy} the representation $\rho_{\cT}^\full$ defines a point of $\cD^\full$, hence a homorphism of local Artinian $\bar \Q_p$-algebras
\begin{equation}\label{RT-full}
\cR^\full \to \cT^\full.
\end{equation}

\begin{thm}  \label{local-isom}
The map (\ref{RT-full}) is an isomorphism. Moreover   $\cC^{\full} \to \cC$ is an isomorphism locally at 
the point defined by $f_\alpha$ and $\cC^{\full}$ is smooth at   that point.
 \end{thm}
\begin{proof}
We show first that (\ref{RT-full})  is surjective. 
By Proposition \ref{propsurj} it is enough to prove that $U_\ell$ belongs to the image for  $\ell\mid N$. If $a_\ell=0$, then by Proposition \ref{monodromy}(ii) $U_\ell=0$ in $\cT^\full$  and there is nothing to prove. If $a_\ell \neq 0$, then by Proposition \ref{monodromy}(i) $U_\ell$ is the image of the element of $\cR^\full$ by which $\Frob_\ell$ acts on the free rank one module   $\rho_{\cR^\full}^{I_\ell}$.

The composition $\cR \twoheadrightarrow \cR^\full \twoheadrightarrow \cT^\full$ is surjective as a composition of two surjective maps, and factors through $\cT$, 
yielding a surjective homomorphism $\cT \twoheadrightarrow \cT^\full$. 
Since $\cT \to \cT^\full$ is  injective by definition, it is an isomorphism. 
Equivalently, since $\cT^\full$ is equidimensional of Krull dimension one, and $\cT$ is regular by Theorem~\ref{thmiso}, one has $\cT \simeq \cT^\full$ and $\cR^\full \simeq \cT^\full$. 
 \end{proof}

\bibliographystyle{siam}

\end{document}